\def\blfootnote{\xdef\@thefnmark{}\@footnotetext}
\renewcommand\footnotemark{}
\newcommand{\N}{\mathbb N}
\newcommand{\CP}{\mathbb{CP}}
\renewcommand{\L}{\mathcal L}
\newcommand{\Z}{\mathbb Z}
\renewcommand{\S}{\mathcal S}
\newcommand{\T}{\mathcal T}
\renewcommand{\H}{\mathcal H}
\newcommand{\al}{\alpha}
\newcommand{\be}{\beta}
\newcommand{\de}{\delta}
\newcommand{\ep}{\varepsilon}
\newcommand{\si}{\sigma}
\newcommand{\ga}{\gamma}
\newcommand{\De}{\Delta}
\newcommand{\Si}{\Sigma}
\newcommand{\Ga}{\Gamma}
\newcommand{\ula}{\bm{\lambda}}
\newcommand{\umu}{\bm{\mu}}
\newcommand{\la}{\lambda}
\newcommand{\x}{\times}
\newcommand{\h}{\mathfrak{h}}
\newcommand{\id}{id} 
\newcommand{\hra}{\hookrightarrow}
\newcommand{\del}{\partial}
\renewcommand{\int}{\rm int}
\newcommand{\co}{\thinspace\colon}
\newcommand{\lra}{\longrightarrow}
\newcommand*\sm[1]{\left(\begin{smallmatrix}#1\end{smallmatrix}\right)}
\newcommand*\mat[1]{\begin{pmatrix}#1\end{pmatrix}}
\DeclareMathOperator{\fr}{fr}
\DeclareMathOperator{\Map}{Map}
\newcommand{\lk}{lk}
\newtheorem{thm}{Theorem}[section]
\newtheorem{lemma}[thm]{Lemma}
\newtheorem{prop}[thm]{Proposition}
\theoremstyle{definition}
\newtheorem{defn}[thm]{Definition}
\newtheorem{rmk}[thm]{Remark}
\newtheorem*{ack}{Acknowledgments}
\newtheorem{exa}[thm]{Example}
\numberwithin{equation}{section}
\newcommand{\Addresses}{{
  \bigskip
  \footnotesize
  \textsc{Dipartimento di Matematica, Largo Bruno Pontecorvo 5, 56127 Pisa, Italy}\par\nopagebreak
  \textit{E-mail addresses:}\ \texttt{paolo.lisca@unipi.it, andrea.parma94@gmail.com}
}}
\title{Horizontal decompositions, I}
\begin{document}
\author{Paolo Lisca \and Andrea Parma}
\maketitle	

\begin{abstract}
We show that every smooth, closed, orientable 4-manifold $X$ admits a special kind of handlebody decomposition 
that we call {\em horizontal}. We classify the closed $4$-manifolds with the simplest 
horizontal decompositions and we describe all such decompositions of $\CP^2$, showing that they give rise 
to infinitely many of the known embeddings of rational homology balls in the complex projective plane. 
\end{abstract} 

\blfootnote{2020 {\it Mathematics Subject Classification.}\, 57R40 (Primary), 57K43, 57R17 (Secondary).} 
\blfootnote{Keywords: rational homology balls, smooth embeddings, handlebody decompositions.}

\section{Introduction}\label{s:intro} 
Let $p>q\geq 0$ be coprime integers and $B_{p,q}$ the rational homology ball smoothing of the quotient singularity $\frac{1}{p^2}(pq-1,1)$ used by Fintushel and Stern in the rational blow-down construction~\cite{FS97}. We were led to the results to this paper by the realization that the smooth embeddings  $B_{p,q}\hra\CP^2$ constructed in~\cite{LP20} were obtained using certain special handlebody decompositions of $\CP^2$ and that every smooth, closed, orientable $4$-manifold admits such  decompositions. The purpose of this paper is to define horizontal decompositions, prove the general existence result just mentioned, study the simplest cases and illustrate their potential applications by showing how to recover infinitely many of the known embeddings of the $B_{p,q}$'s into $\CP^2$ in a ``systematic'' way. 

Let $X\co \del_- X\to \del_+ X$ be a smooth, oriented, $4$-dimensional cobordism, i.e.~a smooth, oriented, compact $4$-manifold with oriented boundary $\del X = \del_+ X\cup -\del_- X$. We always assume that $\del_- X$ and $\del_+ X$ are non-empty and connected. By e.g.~\cite[Proposition~4.2.13]{GS99}, $X$ admits a handlebody decomposition relative to $\del_- X$ without $0$- nor $4$-handles. Suppose such a decomposition has $u$ $1$-handles and $h$ $3$-handles. The $1$-handles give a cobordism $\del_- X\to \del_-^u X := \del_- X\#^u S^1\x S^2$, and the $3$-handles a cobordism $\del_+^h X :=\del_+ X\#^h S^1\x S^2\to \del_+ X$. The cobordism $\del_-^u X\to \del_+^h X$ given by the $2$-handles determines and is determined by a framed link $L\subset \del^u_- X$. We denote it by $X_L$. 
\begin{defn}\label{d:horizL}
Let $Y$ be a closed, oriented $3$-manifold and $L=\cup_{i=1}^k L_i\subset Y$ a 
$k$-component framed link. We say that $L$ is a {\em horizontal link of type $g$} if $g>0$ and there is a 
decomposition $Y = H_g \cup \Si_g\x [0,1]\cup H'_g$, where $H_g$, $H'_g$ are three-dimensional genus-$g$ handlebodies, 
and real numbers $0<t_1<\cdots<t_k<1$ such that, for each $i=1,\ldots,k$: 
\begin{itemize}
\item
$L_i$ is a homotopically non-trivial simple closed curve on 
$\mathfrak{S}_i:=\Sigma_g\x \{t_i\}$;
\item
|$\fr(L_i) - \fr_{\mathfrak{S}_i}(L_i)| = 1$, where 
$\fr(L_i)$ is the given framing of $L_i$ and $\fr_{\mathfrak{S}_i}(L_i)$ the framing  induced on $L_i$ by $\mathfrak{S}_i$.
\end{itemize} 
We call the difference $\fr(L_i) - \fr_{\mathfrak{S}_i}(L_i)$ the {\em relative framing}  of $L_i$. 
\end{defn}

\begin{defn}\label{d:hfl-hhd}
When $L\subset \del^u_- X$ is a horizontal link of type $g$ 
with $\ell = |L|$ components and $X_L\co \del_-^u X\to \del_+^h X$, 
we call a handlebody decomposition of an oriented cobordism 
$X\co \del_- X\to \del_+ X$ 
as above a {\em horizontal decomposition} of $X$ of type $(g,u,\ell,h)$, or simply 
a {\em $(g,u,\ell,h)$-decomposition} of $X$. 
We say that a smooth, closed 4-manifold $\hat X$ has a horizontal 
decomposition of type $(g,u,\ell,h)$ 
if $X:=\hat X\setminus\{B^4\cup B^4\}$ has one as a cobordism $S^3\to S^3$.
\end{defn} 

Note that $g\geq b_1(\del_-^u X) = u + b_1(\del_- X)\geq u$.  
Moreover, it is not difficult to check that if a link $L\subset \del_-^u X$ is horizontal 
with respect to a genus-$g$ Heegaard splitting  
$H_g\cup_{\Si_g}\cup H'_g = \del_-^u X$, then $\del_+^h X$ has a genus-$g$ Heegaard 
splitting obtained by cutting along $\Si_g$ and regluing via a diffeomorphism -- see Lemma~\ref{l:heegaard}. 
In particular, the Heegaard genus of $\del_+^h X$ is at most $g$. 
As above, the same number cannot be less than $h$.  
This shows that {\em for every horizontal decomposition of type $(g,u,\ell,h)$ 
the inequality $g\geq\max(u,h)$ holds}.

\begin{exa}\label{exa:exa1}
The following simple example illustrates the above definitions. View the $\langle 0\rangle$-labelled unknot in Figure~\ref{f:s2xs2} as a surgery description of $S^1\x S^2$. The torus $T$ visible in Figure~\ref{f:s2xs2} is a Heegaard surface, the framed link $\L$ given by the three framed knots is framed isotopic to several type-$1$ horizontal links $L$, and in each case $X_L\co S^1\x S^2\to S^3$.  
\begin{figure}[ht]
\labellist
\hair 2pt
\pinlabel $T$ at 345 100
\pinlabel $\langle 0\rangle$ at 34 100
\pinlabel $1$ at 105 -15
\pinlabel $-1$ at 170 -15
\pinlabel $-1$ at 235 -15
\endlabellist
\centering
\includegraphics[scale=0.4]{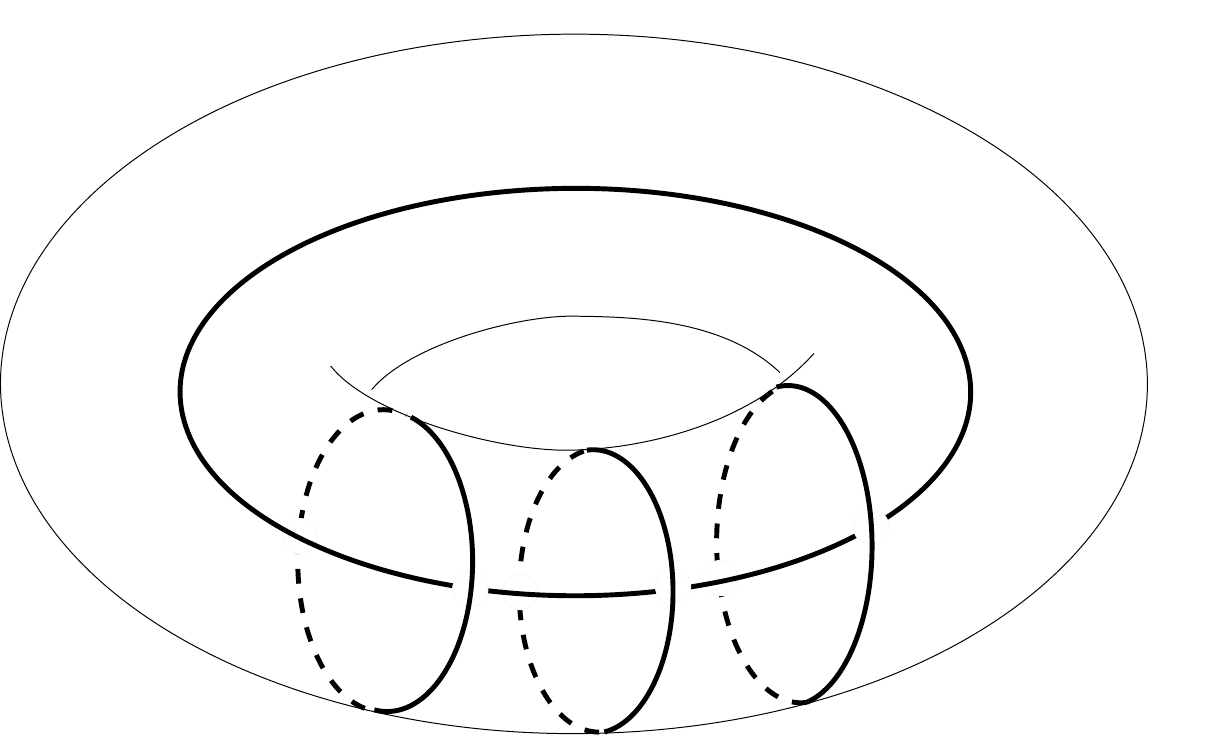}
\vspace*{0.2cm}
\caption{A framed link giving $(1,1,2,0)$-decompositions of $S^2\x S^2$}
\label{f:s2xs2}
\end{figure}
By~\cite{Ce68} and~\cite{LP72}, there is a canonical way to obtain a closed $4$-manifold $\widehat X_L$ gluing $S^1\x D^3$ to $X_L$ along $S^1\x S^2$ and a $4$-ball along $S^3$. An easy application of Kirby calculus shows that  $\widehat X_L = S^2\x S^2$. 
\end{exa} 

The following establishes a basic existence result for horizontal decompositions.

\begin{thm}\label{t:exist} 
Let $X\co \del_- X\to \del_+ X$ be a smooth, oriented, connected, $4$-dimensional cobordism 
such that $\del_- X$ and $\del_+ X$ are non-empty and connected. Then, $X$ 
admits a horizontal decomposition of type $(u+g_H(\del_- X),u,\ell,h)$, where 
$g_H(\del_- X)$ is the Heegaard genus of $\del_- X$, and such that 
each component of the associated horizontal link is non-separating in the Heegaard surface. 
\end{thm}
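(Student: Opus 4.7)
The plan is to start from an arbitrary handle decomposition of $X$ relative to $\del_- X$ and upgrade it to a horizontal one of the claimed type by a sequence of elementary modifications. Fix a handle decomposition with no $0$- nor $4$-handles, say with $u$ one-handles, $\ell$ two-handles and $h$ three-handles, and let $L\subset \del^u_- X$ be the attaching framed link of the $2$-handles. Choose a minimal genus Heegaard splitting of $\del_- X$, with surface $\Si_0$ of genus $g_H(\del_- X)$. Each of the $u$ tubes realizing the identification $\del^u_- X = \del_- X\#^u S^1\x S^2$ can be positioned so that the splitting of $\del_- X$ extends, via $u$ standard genus-$1$ Heegaard tori, to a Heegaard surface $\Si\subset\del^u_- X$ of the desired genus $g = u+g_H(\del_- X)$.

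Next, I would isotope $L$ into bridge position with respect to $\del^u_- X = H_g\cup_\Si H'_g$, obtaining a $4$-valent immersed diagram of $L$ on $\Si$ with finitely many over/under crossings. Each crossing is resolved by adding a cancelling $1$/$2$-handle pair to $X$ whose new $1$-handle is attached along two small disks placed on either side of the crossing: the new $1$-handle produces a new genus-$1$ tube on $\Si$ through which the overpass can be isotoped, thereby eliminating the crossing, while the new $2$-handle is attached along a curve running once through the tube. Each resolution increases both $u$ and $\ell$ by $1$ and the genus of $\Si$ by $1$, preserving the relation $g = u+g_H(\del_- X)$. After finitely many such resolutions, the updated link $L$ lies entirely on the updated Heegaard surface $\Si$.

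It remains to enforce the two conditions of Definition~\ref{d:horizL}. For each component $L_i$ with $|\fr(L_i)-\fr_\Si(L_i)|\neq 1$, I would introduce an additional cancelling $1$/$2$-handle pair in a tubular neighborhood of $L_i$ and re-route $L_i$ over the resulting auxiliary tube: by choosing how many times and with which orientation $L_i$ wraps over the tube, $\fr_\Si(L_i)$ changes by $\pm 1$ while the framed isotopy class of $L_i$ in $\del^u_- X$ is unchanged, and iterating brings the relative framing to $\pm 1$. Any component that is separating on $\Si$ is made non-separating by attaching a further tube across it in the same way. Once all components are disjoint simple closed curves on $\Si$ satisfying the framing and non-separating conditions, isotoping them onto distinct parallel copies $\Si\x\{t_i\}$ in a collar completes the construction. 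The main technical hurdle is the framing step: one must argue that a localized tube addition can be made to change $\fr_\Si(L_i)$ by exactly $\pm 1$, without disturbing either the framed isotopy class of $L_i$ or the horizontality already established on the other components; this rests on a careful local analysis comparing the old and new surface framings at the site of the modification.
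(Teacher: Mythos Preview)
Your strategy matches the paper's: project $L$ onto a genus-$(u+g_H(\partial_-X))$ Heegaard surface, eliminate crossings by adding $(1,2)$-cancelling pairs and sliding, then adjust relative framings and make components non-separating. The paper is slightly more economical in that it only eliminates \emph{bad} crossings (self-crossings, and those where a lower-index component overpasses a higher-index one), leaving the remaining good crossings to encode the ordering of the levels $\Sigma\times\{t_i\}$; your resolve-all-crossings variant is equally valid, just less efficient.

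The framing step, which you rightly flag as the main hurdle, is where the paper supplies a concrete mechanism that your description lacks. Re-routing $L_i$ through a newly added tube does not by itself change the relative framing in a controlled way: if the cancelling $2$-handle is placed with relative framing $0$ (as is natural for the crossing-elimination slides), then sliding $L_i$ over it changes the ambient framing and the surface framing of $L_i$ by the same amount, so the \emph{relative} framing of $L_i$ is unaffected. The paper's device is the Kirby-calculus move of \emph{twisting a $1$-handle by $360^\circ$} (cf.~\cite[Section~5.4]{GS99}): after stabilizing and sliding so that $L_i$ runs over the new $1$-handle once, one applies a full twist to one attaching ball of that $1$-handle. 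This does not change the $4$-manifold but simultaneously shifts the relative framing of the cancelling $2$-handle from $0$ to $\pm1$ and that of $L_i$ by $\mp1$. Iterating brings every relative framing to $\pm1$, which converts your outline into a complete proof.
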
 

\begin{rmk}\label{r:tunnel} 
The referee has pointed out that the existence of horizontal decompositions also 
follows from the fact that each framed link is horizontal with respect to some Heegaard 
splitting. At the end of Section~\ref{s:exist} we sketch a proof of this fact following the referee's 
suggestion.
\end{rmk}

We prove Theorem~\ref{t:exist} by showing that any handlebody decomposition of $X$ can be made horizontal by 
adding $(1,2)$-cancelling pairs and sliding $2$-handles -- see Section~\ref{s:exist}. 
Our proof bears some resemblance with John Harer's argument showing that any 4-dimensional 2-handlebody 
admits an achiral Lefschetz fibration with bounded fibers over $D^2$ (\cite{Ha79}, see also~\cite[Theorem~7]{EF06}). 
Theorem~\ref{t:exist} may also be compared to the main result in~\cite{EF06}, where 
Etnyre and Fuller combine Harer's result with Giroux's correspondence~\cite{Gi02} to prove that 
the complement of a smoothly embedded circle in a closed 4-manifold admits an achiral fibration over $S^2$.  
Nevertheless, the two results appear to us genuinely different, and in any case we do not rely on Giroux's correspondence. 

\begin{rmk}\label{r:genus} 
In view of Definition~\ref{d:hfl-hhd} and Theorem~\ref{t:exist}, 
it makes sense to define the {\em genus} of a smooth, oriented cobordism or 
closed 4-manifold $X$ as the smallest $g\geq 1$ such that $X$ admits a 
$(g,u,\ell,h)$-decomposition. It is easy to check 
that a cobordism (or closed 4-manifold) $X$ with genus $g(X)=1$ and first Betti number $b_1(X)=0$ must have 
finite cyclic fundamental group. This implies that there are many smooth, closed 4-manifolds $X$ with $b_1(X)=0$, 
$g(X)>1$ and $\chi(X) = 2$, which is the 
smallest possible value of the Euler characteristic under these assumptions.   
As kindly pointed out by the referee, simple examples are given by spun aspherical homology $3$-spheres~\cite{Go76, Su88}. 
\end{rmk}

In order to get the first classification results for cobordisms with horizontal decompositions we introduce some simplifying assumptions. 
A natural choice is to look at cobordisms $X\co S^3\to S^3$. 
The following result deals with the simplest cases of 
$(1,u,\ell,0)$-decompositions. In a forthcoming paper~\cite{LP22} we deal 
with the simplest $(1,u,\ell,1)$-decompositions. 
\begin{thm}\label{t:type10}
Let $X\co S^3\to S^3$ be an oriented cobordism with a $(1,u,\ell,0)$-decomposition. Then, after possibly reversing the orientation of $X$, we have 
\[
X \cong 
\begin{cases}
S^3\x [0,1] & \quad\text{if $\chi(X)=0$},\\
\CP^2\setminus\left(B^4\sqcup B^4\right) & \quad\text{if $\chi(X)=1$.}
\end{cases} 
\]
\end{thm}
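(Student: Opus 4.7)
The plan is to exploit the bound $g\ge\max(u,h)$ from just after Definition~\ref{d:hfl-hhd}: with $g=1$ and $h=0$ it forces $u\in\{0,1\}$, and since the decomposition has no $0$- or $4$-handles, $\chi(X)=\ell-u$. The hypotheses $\chi(X)\in\{0,1\}$ single out the four cases $(u,\ell)\in\{(0,0),(0,1),(1,1),(1,2)\}$; the case $(0,0)$ yields $X\cong S^3\x[0,1]$ trivially.

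For $(u,\ell)=(0,1)$, $L_1$ is a $(p,q)$-torus curve on the genus-$1$ Heegaard torus of $S^3$ with framing $pq\pm 1$. Moser's classification of surgeries on torus knots forces $|pq\pm 1|=1$, hence $pq\in\{-2,0,2\}$; in each subcase $L_1$ is unknotted and framed $\pm 1$, so $X\cong\pm\CP^2\setminus(B^4\sqcup B^4)$.

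For $u=1$, $\del_-^u X=S^1\x S^2$ carries its unique genus-$1$ Heegaard splitting $V_1\cup_T V_2$, and each $L_i$ is a $(p_i,q_i)$-curve on $T$. Since $H_1(\del_+^h X)=H_1(S^3)=0$, we must have $\gcd(p_1,\ldots,p_\ell)=1$; after relabelling, $p_1=\pm 1$, so $L_1$ represents a generator of $\pi_1(S^1\x S^2)$. Using the simple connectivity of $S^2$ to unwind the $q_1$-direction, $L_1$ is ambient-isotopic in $S^1\x S^2$ to the core $S^1\x\{pt\}$, which meets the cocore of the $1$-handle transversely in a single point; the $1$-handle and the $L_1$-$2$-handle then form a smoothly canceling pair (independently of framing), and removing them leaves $X$ unchanged with a handle decomposition involving $u-1$ $1$-handles and $\ell-1$ $2$-handles.

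For $(u,\ell)=(1,1)$ the cancellation empties the decomposition, so $X\cong S^3\x[0,1]$. For $(u,\ell)=(1,2)$ we are left with a single $2$-handle attached to some knot $K\subset S^3$ whose integer Dehn surgery produces $S^3$; by the Gordon--Luecke theorem together with the resolved Property~$\mathrm{P}$ conjecture, $K$ must be the unknot with framing $\pm 1$, whence $X\cong\pm\CP^2\setminus(B^4\sqcup B^4)$. The main technical obstacle is the ambient isotopy step in the $u=1$ cases---verifying concretely that the horizontal $L_1$ can be isotoped off the Heegaard torus into a position meeting the $1$-handle's cocore transversely in one point; once this is accomplished, orientation reversal extracts the stated forms $S^3\x[0,1]$ and $\CP^2\setminus(B^4\sqcup B^4)$.
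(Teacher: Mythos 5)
Your reduction to the four cases $(u,\ell)\in\{(0,0),(0,1),(1,1),(1,2)\}$ and your treatment of $(0,0)$, $(0,1)$ and $(1,1)$ are essentially sound (and match the paper's, modulo your invoking Moser/Gordon--Luecke where elementary $H_1$ counts suffice). But the main case $(u,\ell)=(1,2)$ contains a fatal gap.

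The step that fails is: ``$H_1(\del_+X)=0$ forces $\gcd(p_1,\ldots,p_\ell)=1$; after relabelling, $p_1=\pm 1$.'' The gcd of a set of integers being $1$ does not make any single one of them equal to $\pm 1$, and in the situation at hand this is not a repairable slip: the paper shows (Section~4 and Lemma~\ref{l:n=13}) that the admissible pairs $(p_1,p_2)$ include, for example, $\pm(F_{2k-1},F_{2k+1})$ for all $k$ --- consecutive odd-indexed Fibonacci numbers, neither of which is $\pm 1$ once $k\geq 2$. So in general \emph{neither} $2$-handle meets the belt sphere of the $1$-handle geometrically once, no cancelling pair is available, and your entire reduction to a single $2$-handle on a knot in $S^3$ (and hence the appeal to Gordon--Luecke/Property~P) never gets off the ground. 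This is precisely the phenomenon that produces the nontrivial embeddings $B_{p,q}\hookrightarrow\CP^2$ of Theorem~\ref{t:disj-emb}; a proof that made one of the curves automatically cancel the $1$-handle would show those embedded pieces are always balls, which is false.

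What is missing is the paper's descent mechanism: encode the pair $(\ga_1,\ga_2)$ by $(x,y,n)=(\ga_1\cdot\la,\ga_2\cdot\la,\ga_2\cdot\ga_1)$, derive the Diophantine equation $\de_2x^2+\de_1y^2+nxy=\ep$ from the Heegaard-diagram condition $|\la\cdot m_L(\la)|=1$, and show that the ``mutations'' $(x,y)\mapsto(\hat x,y),(x,\hat y)$ strictly decrease $|xy|$ until one reaches a bottom solution with $|xy|\leq 1$ (Lemma~\ref{l:mutations}). Crucially, each mutation is realized geometrically by a Hurwitz move on the factorization, i.e.\ by handle slides preserving the diffeomorphism type (Proposition~\ref{p:hurwitz}). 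Only after this reduction does one land in a configuration where a curve winds once around the $1$-handle and the elementary cancellation you describe applies. Without an argument of this kind your proof does not cover the generic $(1,1,2,0)$-decomposition.
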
 

A key fact used in the proof of Theorem~\ref{t:type10} is that to a horizontal framed link $L\subset\Si_g\x [0,1]$ one can naturally associate an element of the mapping class group $\Map(\Si_g)$ factorized as a product of Dehn twists. In fact, suppose $L = \bigcup_{i=1}^k L_i$, with 
\[
L_i \subset \mathfrak{S}_i:=\Sigma_g\x \{t_i\},\qquad 0<t_1<\cdots<t_k<1.
\]
Since $\pi$ identifies each $\mathfrak{S}_i$ 
with $\Si_g$, one can associate to each component $L_i$ the Dehn twist 
$$
\tau_i := \tau_{\pi(L_i)}\in\Map(\Si_g),
$$ 
where $\pi\co\Si_g\x [0,1]\to\Si_g$ is the projection onto the first factor. We define the {\em factorization} of $L$ to be the $k$-tuple   
\[
F_L = (\tau_k^{\de_k}, \tau_{k-1}^{\de_{k-1}},\ldots, \tau_1^{\de_1}),
\] 
where the exponent $\de_i := \fr_{\mathfrak{S}_i}(L_i) - \fr(L_i)\in\{\pm 1\}$ is equal 
to {\em minus} the relative framing  of $L_i$ for each $i$. 
The product $m_L = \tau_k^{\de_k}\cdots\tau_1^{\de_1}\in\Map(\Si_g)$ will be called the {\em monodromy} of $L$. 
Sometimes, when the horizontal link $L$ is clear from the context, we will abuse 
of language and talk about the factorization or the monodromy of the decomposition 
of the associated cobordisms $X_L$ or $X$. 

\begin{exa}\label{exa:exa2}
The concept of factorization can be exemplified via the link $\L$ of Example~\ref{exa:exa1}. 
As we already observed, $\L$ is framed isotopic to several type-$1$ horizontal links 
$L = \cup_{i=1}^3 L_i\subset S^1\x S^2$. For example, we can arrange that the 1-framed unknot becomes the component 
$L_1\subset\Si_g\x\{t_1\}$ at the "lowest level". In that case the horizontal link $L$ has factorization $(\tau_\mu,\tau_\mu,\tau^{-1}_\mu)$, 
where $\mu\subset T$ is the meridian of the $0$-framed unknot. If we replace $\L$ with any sublink $\L'\subset\L$ consisting of two 
components of $\L$, depending on the choice of $\L'$ and the isotopies the corresponding horizontal 
link $L'$ has factorization $(\tau_\mu^{-1},\tau_\mu)$, $(\tau_\mu,\tau_\mu^{-1})$ or $(\tau_\mu,\tau_\mu)$. 
\end{exa} 

In the proof of Theorem~\ref{t:type10} we determine the factorizations of all 
$(1,u,\ell,0)$-decompositions of $\CP^2$ minus two disjoint balls, viewed as an 
oriented cobordism $S^3\to S^3$. 
We do that by relating such factorizations with the solutions of certain 
degree-two Diophantine equations. We exploit a certain structure of the solutions of 
those equations to show that any horizontal decomposition of $X$ can be simplified in 
a ``systematic'' way. For this approach to the proof of Theorem~\ref{t:type10} 
we took inspiration from Denis Auroux's paper~\cite{Au15}. 

As we mentioned above, we discovered horizontal decompositions while trying to construct smooth embeddings of 
the rational balls $B_{p,q}$ into $\CP^2$. Hence, it should not be surprising that the proof of 
Theorem~\ref{t:type10} yields such embeddings, leading to   
Theorem~\ref{t:disj-emb} below. In order to give some motivation we provide the following brief survey of 
results about embeddings of rational homology balls in $\CP^2$. 

It is well-known that $\del B_{p,q}$ is the lens space $L(p^2,pq-1)$. Moreover, $B_{p,p-q}$ 
is orientation-preserving diffeomorphic to $B_{p,q}$ by~\cite[Remark~2.8]{ES18}. 
Using results by Khodorovskiy~\cite{Kh13} it is not hard to show~\cite[Section~2.1]{ES18} that if the 
positive integers $p_1$, $p_2$ and $p_3$ form a {\em Markov triple}, that is 
$p_1^2+p_2^2+p_3^2 = 3 p_1 p_2 p_3$, then there is a symplectic embedding of the disjoint union 
$\cup_{i=1}^3 B_{p_i,q_i}$ into $\CP^2$, where $q_i=\pm 3p_j/p_k \bmod p_i$ with $\{i,j,k\}=\{1,2,3\}$. 
Conversely, Evans and Smith~\cite[Theorem~1.2]{ES18} generalized results of Hacking and Prokhorov~\cite{HP10} in complex algebraic geometry by showing that if $B_{p_i,q_i}\subset\CP^2$, $i=1,...,N$, is a collection of pairwise disjoint symplectic embeddings, then $N\leq 3$ and the $p_i$'s and the $q_i$'s 
satisfy some constraints which coincide with the ones above when $N=3$. 
Moreover, they showed that if $B_{p,q}\subset\CP^2$ is a symplectic embedding then 
$\CP^2\setminus B_{p,q}$ contains a disjoint union $B_{p',q'}\sqcup B_{p'',q''}$, therefore $p$ must belong 
to a Markov triple and divide $q^2+9$. Let \{$F_m\}_{m\in\Z}$ be the classical Fibonacci sequence defined by $F_{-1} = 1$, $F_0 = 0$ 
and $F_{m+1} = F_{m} + F_{m-1}$. Owens~\cite[Theorem~1]{Ow20} recently proved the existence of smooth 
embeddings $B_{F_{2m+1}, F_{2m-1}}\subset\CP^2$ for each $m\geq 1$. He also observed that $F_{2m+1}$ divides 
$F_{2m-1}^2+9$ only if $m=1$, so it follows from~\cite{ES18} that 
$B_{F_{2m+1}, F_{2m-1}}$ does not embed {\em symplectically} in $\CP^2$ for $m>1$. Moreover, Owens used 
Donaldson's Theorem~A~\cite{Do83} to show that it is not possible to embed in $\CP^2$ a disjoint union of two 
or more of the rational homology balls $B_{F_{2m+1}, F_{2m-1}}$ with $m\geq 1$. In~\cite{LP20} we extend Owens' 
family of smooth embeddings to a two-parameter family $\{B(k,m)\}\subset\{B_{p,q}\}$ such that $B(k,m)$ cannot 
be symplectically embedded in $\CP^2$. 
Moreover, in~\cite{LP20-2} we prove the non-existence of almost complex embeddings $B(k,m)\subset\CP^2$ 
without relying on~\cite{ES18}. 

The proof of Theorem~\ref{t:type10} shows that when $\chi(X)=1$  
most of the horizontal decompositions of $X$ contain one 1-handle and two 2-handles. 
As explained in Section~\ref{s:disj-emb}, this implies that many 
horizontal decompositions of $\CP^2$ yield a smooth embedding of a disjoint union of $B_{p,q}$'s into $\CP^2$. 
In Section~\ref{s:disj-emb} we use this fact to prove the following Theorem~\ref{t:disj-emb}. 
Note that $F_{2m+1}>0$ for each $m\in\Z$ and see Theorem~\ref{t:precise-disj-emb} 
for a more precise version of the statement. 
\begin{thm}\label{t:disj-emb} 
The $(1,1,2,0)$-decompositions of $\CP^2$ 
give rise to smooth, orientation-preserving embeddings into $\CP^2$ of 
the disjoint unions 
\[
B_{m+1}\sqcup B_m\quad\text{and}\quad
B'_{m+1}\sqcup B'_m
\]
for each $m\geq 0$ where $B_m = B_{F_{2m-1},F_{2m-5}}$ and $B'_m = (-1)^m B_{F_{m+1},F_{m}}$. 
\end{thm}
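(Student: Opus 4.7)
The plan is to extract two disjoint rational homology balls directly from each $(1,1,2,0)$-decomposition of $X := \CP^2 \setminus (B^4 \sqcup B^4)$. Fix such a decomposition: the unique $1$-handle produces $S^1 \x S^2$ with a genus-one Heegaard surface $\Si$, and the horizontal link $L = L_1 \cup L_2$ has $L_i \subset \Si \x \{t_i\}$ with $t_1 < t_2$ and relative framings $\de_i = \pm 1$. Writing $S^1 \x S^2 = H \cup (\Si \x [0,1]) \cup H'$ for the Heegaard splitting, define
\[
N_1 := H \cup (\Si \x [0, t_1+\ep]) \cup h^2_1, \qquad N_2 := (\Si \x [t_2-\ep, 1]) \cup H' \cup h^2_2,
\]
where $h^2_i$ denotes the $i$-th $2$-handle, and complete each by the $B^4$-cap at the corresponding end of $X$.

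By construction $N_1 \cap N_2 = \emptyset$: they sit in complementary pieces of $(S^1 \x S^2) \x [0,1] \subset X$ separated by the wall $\Si \x [t_1+\ep, t_2-\ep]$. Each $N_i$ deformation-retracts onto a $4$-dimensional solid torus $S^1 \x D^3$ with a $2$-handle attached along a primitive simple closed curve on the Heegaard torus in its boundary. This standard building block is diffeomorphic to $\sgn(\de_i) \cdot B_{p_i, r_i}$, where $(p_i,q_i)$ is the primitive homology class of $L_i$ on $\Si$ and $r_i$ is determined by $(p_i,q_i,\de_i)$ via the standard Kirby diagram for $B_{p,q}$.

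To obtain the specific pairs claimed, one feeds this construction with the explicit list of $(1,1,2,0)$-decompositions of $X$ produced in the proof of Theorem~\ref{t:type10}. That proof parametrizes these decompositions by factorizations $F_L = (\tau_{\al_2}^{\de_2}, \tau_{\al_1}^{\de_1})$ in $\Map(\Si)$ and shows their admissibility reduces to Markov-type Diophantine equations whose positive solutions organize into a tree indexed by consecutive Fibonacci numbers. Reading off the pair $((p_1,q_1),(p_2,q_2))$ along each branch of the tree and matching with the $B_{p,q}$ normal form yields: one branch produces $B_{m+1}\sqcup B_m$, and a second branch, along which the signs $\de_i$ alternate, produces $B'_{m+1}\sqcup B'_m$ with the advertised sign $(-1)^m$.

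The main obstacle is precisely this last step: aligning the Fibonacci recursion inherent in the Markov-type solutions with the indexing in the statement, and showing that the alternating sign $(-1)^m$ in the primed family emerges correctly from the relative framings $\de_i$ via the $\sgn(\de_i)$ factor in the identification of $N_i$ with a rational ball. This is a bookkeeping task requiring simultaneous tracking of homology classes, framings, orientations, and the monodromy $m_L$ along the solution tree, rather than a conceptual difficulty.
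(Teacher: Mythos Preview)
Your overall strategy matches the paper's: Lemma~\ref{l:embed2balls} extracts two disjoint pieces $\hat X_{\ga_1}\sqcup\hat X_{\ga_2}\hookrightarrow\CP^2$ from any $(1,1,2,0)$-decomposition, Proposition~\ref{p:cob-type10} identifies each piece as some $\pm B_{p,q}$, and then the explicit Fibonacci parametrisation of the solution sets $S^{\de_2,\de_1}_{n,-1}$ (Lemma~\ref{l:n=13}) pins down which $B_{p,q}$ occur.

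Two points need repair. First, your $N_i$ as written mix $3$- and $4$-dimensional objects: $H$, $\Si\times[0,t_1+\ep]$ and $H'$ are subsets of $S^1\times S^2$, not of the $4$-manifold, and the ``$B^4$-cap at the corresponding end of $X$'' plays no role (in particular the cap at $\del_+X$ is attached along $S^3$, after both $2$-handles, so it cannot be paired with $h_2^2$ alone). The correct construction is the paper's: identify $B^4_-\cup(\text{$1$-handle})=S^1\times D^3$ with $H_1\times[0,1]$ so that $\del(H_1\times\{t\})=\Si\times\{t\}$, and take the disjoint $4$-dimensional slabs $H_1\times[t_i-\ep,t_i+\ep]$ together with $h_i^2$. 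Your pieces $H\cup(\Si\times[0,t_1+\ep])$ and $(\Si\times[t_2-\ep,1])\cup H'$ are simply the portions of $\del(S^1\times D^3)$ lying over the two outer slabs $H_1\times[0,t_1+\ep]$ and $H_1\times[t_2-\ep,1]$; once you say that, no cap is needed.

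Second, the ``bookkeeping'' is heavier than you indicate. The Diophantine analysis of Section~\ref{s:type10-chi=1} records only $x=p_1$, $y=p_2$ and $n=\ga_2\cdot\ga_1$, not the $q_i$. To identify $\hat X_{\ga_i}$ via Proposition~\ref{p:cob-type10} you must recover $q_i\bmod p_i$ from the single relation $n=p_2q_1-p_1q_2$; the paper does this using Vajda's identity $F_rF_{m+j}-F_mF_{r+j}=(-1)^{r+1}F_{m-r}F_j$ with specific choices of $(r,m,j)$, which is what actually produces the indices $F_{2m-5}$ in $B_m$ and the shift $F_{m+1},F_m$ in $B'_m$. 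The alternating sign $(-1)^m$ in $B'_m$ then comes not from an alternation of $(\de_1,\de_2)$ along a branch (the pair $(\de_1,\de_2)$ is fixed on each solution set), but from the fact that the Hurwitz move $S$ swaps $S^{-1,1}_{1,-1}$ and $S^{1,-1}_{1,-1}$ so that $\ga_1$ and $\ga_2$ exchange their $\de$-values at each step of the recursion.
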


\begin{rmk} The rational balls $B_m$ form a subfamily of the $B_{p,q}$'s that embed symplectically in $\CP^2$. More precisely, for any 
$m \ge 0$ the triple $(1,F_{2m-1},F_{2m+1})$ is a Markov triple producing a symplectic embedding of $B^4 \cup B_m \cup B_{m+1}$. On the other hand, the balls $B'_{2k}=B_{F_{2k+1},F_{2k}} \cong B_{F_{2k+1},F_{2k+1}-F_{2k}}=B_{F_{2k+1},F_{2k-1}}$ are exactly the rational balls shown by Owens to embed smoothly but not symplectically.
\end{rmk}

In~\cite{LP22} we use horizontal decompositions to prove the existence of many more 
smooth embeddings of the rational balls $B_{p,q}$ into $\CP^2$. 

A few questions are naturally raised by our results. First of all, it is natural to wonder about the 
relationship between the various horizontal decompositions of a given cobordism. More precisely, one can ask 
whether there exists a uniqueness result for horizontal decompositions, perhaps up to the Hurwitz moves 
of Section~\ref{ss:hurwitz} combined with some kind of ``horizontal stabilizations''. Secondly, we could ask 
what are the possible generalizations of Theorem~\ref{t:type10}. For instance, we are 
planning to establish the analogue of Theorem~\ref{t:type10} for the simplest cobordisms 
$S^3\to S^3$ with a $(1,u,\ell,1)$-decomposition~\cite{LP22}. Finally, we expect that  
the techniques of the present paper can be applied to find new smooth embeddings of rational 
balls and/or $3$-manifolds in $\CP^2$ and/or other smooth $4$-manifolds with small Euler characteristic. 

The paper is organized as follows. In Section~\ref{s:exist} we prove Theorem~\ref{t:exist}, in 
Sections~\ref{s:type10-chi=0} and~\ref{s:type10-chi=1} we prove Theorem~\ref{t:type10} and in 
Section~\ref{s:disj-emb} we prove Theorem~\ref{t:precise-disj-emb}, 
which implies Theorem~\ref{t:disj-emb}. 

\begin{ack}
The authors would like to thank the referee for their great job and for several helpful comments. 
The present work is part of the MIUR-PRIN project 2017JZ2SW5. 
\end{ack}

\section{Proof of Theorem~\ref{t:exist}}\label{s:exist}

We refer the reader to~\cite[Chapter~4]{GS99} for basic facts about handlebody 
decompositions. The plan of the proof is as follows. We start from any handle decomposition of a smooth $4$-dimensional cobordism $X\co \del_- X\to \del_+ X$ and we modify it so that the framed link $L$ consisting of the attaching curves of the $2-$handles satisfies the conditions of Definition~\ref{d:hfl-hhd}. 

We start by recalling a few facts about the operation of addition of a $(1,2)$-cancelling pair. 
Let $\H$ be a handle decomposition of $X$ without $0$- nor $4$-handles, $u$ $1$-handles and $h$ $3$-handles. Then, attaching the $1$-handles gives a cobordism $\del_- X\to \del^u_- X$. 
A four-dimensional $(1,2)$-cancelling pair consists of a $1$-handle $\h$ and a $2$-handle 
$\h'$ such that the attaching sphere of $\h'$ intersects the belt sphere of $\h$ transversely in 
a single point. The introduction a $(1,2)$-cancelling pair does not alter 
$X\co \del_- X\to\del_+ X$ but turns $\H$ into a new handlebody decomposition $\H'$ of 
$X$ in such a way that the $1$-handles of $\H'$ give a subcobordism $X'\subset X$ from 
$\del_- X$ to the $3$-manifold $Y$ obtained from $\del^u_- X$ by removing the 
interior of two disjoint $3$-balls $B_1$ and $B_2$ and gluing to each other the corresponding boundary $2$-spheres via an orientation-reversing diffeomorphism.
In other words, $Y$ is diffeomorphic to $\del^{u+1}_- X$ and $X'\co \del_- X\to Y$. 
The cancelling $2-$handle is attached along any arbitrarily framed curve in 
$\del^u_- X\setminus (B_1\cup B_2)$ that connects two identified points of $\del B_1$ 
and $\del B_2$. Such a curve determines a component of the new framed link in $\del^{u+1}_- X$. 

Let now $\Si\subset \del^u_- X$ be a  Heegaard surface of genus 
$g_H(\del^u_- X) = u + g_H(\del_- X)$. Since we can always add a $(1,2)$-cancelling 
pair, from now on we assume $g>0$ without loss of generality. The complement 
of $\Si$ in $\del^u_- X$ is a disjoint union $H_g\cup H'_g$ of two genus-$g$ handlebodies. Moreover, 
the handlebodies $H_g$ and $H'_g$ are regular neighborhoods of graphs $G\subset H_g$ and $G'\subset H'_g$, and 
$\del^u_- X\setminus\{G\cup G'\}$ is diffeomorphic to the product of $\Si$ with an open interval. 
Up to framed isotopy we may assume $L$ disjoint from $G\cup G'$, thus contained in a closed regular 
neighborhood $N$ of $\Sigma$. After fixing a diffeomorphism between $N$ and $\Si\times [0,1]$ and 
consequently a projection $\pi: N \rightarrow \Sigma$, we may also assume that the restriction 
$\pi|_L\co L\to\Si$ is an immersion, and that each self-intersection of $\pi(L)\subset\Si$ is a transverse 
double point. Hence, we can represent $L$ using its diagram $D_L\subset\Si$ consisting of $\pi(L)$ together 
with the over-under information at each point of self-intersection. Recall that, for any framed, simple closed 
curve $\gamma\subset N$ contained in a horizontal copy of $\Si$, i.e.~a surface of the form 
$\Sigma \times \{ t \}\subset\Si\x [0,1]$, the relative framing  of $\gamma$ is the integer 
$\text{fr}(\gamma)-\text{fr}_{\Sigma \times \{t\}}(\gamma)$. Observe that a framed link 
$L=\cup_{i=1}^k L_i\subset N$ is horizontal if each connected component 
$L_i\subset L\subset N$ sits on a horizontal copy of $\Sigma$ and: 
\begin{enumerate}
\item[(1)]
the relative framing  of $L_i$ is $\pm 1$;
\item[(2)]
the diagram $D_{L_i}$ of $L_i$ has no crossings and is therefore a simple closed curve in $\Si$;
\item[(3)]
$D_{L_i}$ is non-separating in $\Si$;
\end{enumerate} 
moreover, for each $i<j$:
\begin{enumerate} 
\item[(4)]
at each crossing between $D_{L_i}$ and $D_{L_j}$ the overpassing arc belongs to 
$D_{L_j}$.
\end{enumerate}
We say that a crossing of $D_L$ is \textit{bad} if it involves a diagram $D_{L_i}$ which violates 
either $(2)$ or $(4)$. We are going to change the handle decomposition $\H$ into a horizontal 
decomposition of $X$ via a finite sequence of steps. At each step we shall either eliminate a bad 
crossing or adjust the framing of some $L_i$, making sure that 
the genus of $\Si$ is $u + g_H(\del_- X)$ at each step, and at the last step 
Conditions $(1)$ through $(4)$ above are satisfied.  
In the process of changing $\H$ we will add four-dimensional $(1,2)$-cancelling pairs  
using the following {\em stabilization} procedure for each pair:
\begin{itemize}
\item 
choose two disks $\Delta_1$ and $\Delta_2$ in $\Sigma$, disjoint from each other and from the projections of $L_1,...,L_n$, as well as an orientation-reversing diffeomorphism $\varphi$ between their boundaries;
\item 
let $B_i\subset \del^u_- X$, for $i=1,2$, be disjoint 3-balls such that $B_i\cap N$ corresponds to 
$\Delta_i \times [0,1]$ under the identification of $N$ with $\Si\times [0,1]$; 
\item 
attach a $1$-handle $D^3\x [0,1]$ to $\del^u_- X$ along $B_1$ and $B_2$, giving a cobordism 
from $\del^u_- X$ to $\del^{u+1}_- X$, with the latter viewed as the quotient of 
$\del^u_- X\setminus (B_1\cup B_2)$ obtained by gluing $\del B_1$ to $\del B_2$ 
via a diffeomorphism which looks like $\varphi\x\id_{[0,1]}$ when restricted to $N$;
\item 
attach the $2$-handle along a curve contained in a horizontal copy of $\Si$ inside $N$.
\end{itemize}
Note that the image $N'$ of $N\setminus (B_1\cup B_2)$ inside $\del_-^{u+1} X$ is diffeomorphic to 
$\Si'\x [0,1]$, where $\Si'$ is the quotient of $\Si\setminus (\De_1\cup\De_2)$ obtained by gluing 
$\del\De_1$ to $\del\De_2$ via the diffeomorphism $\varphi$. Also, we have a projection 
$\pi':N' \rightarrow \Sigma'$ induced by the restriction of $\pi$ to 
$N\setminus (B_1\cup B_2)$ and $\Si'\subset \del^{u+1}_- X$ is a Heegaard surface of genus 
$g(\Si)+1 = u + 1 + g_H(\del_- X)$. 

We are now going to start the modification of $\H$. Suppose that $i<j$ and let $c$ be a bad crossing 
between $D_{L_i}$ and $D_{L_j}$ inside $\Si$. Then, the over-passing arc at $c$ belongs to $L_i$, 
as in the left picture of Figure~\ref{f:stab}. 
\begin{figure}[ht]
\centering
\labellist
\hair 2pt
\pinlabel $L_i$ at 24 15
\pinlabel $L_i$ at 172 15
\pinlabel $L_i+L_{k+1}$ at 325 15
\pinlabel $L_j$ at 63 83
\pinlabel $L_j$ at 211 83
\pinlabel $L_j$ at 371 83
\pinlabel $L_{k+1}$ at 250 53
\pinlabel $L_{k+1}$ at 424 53
\endlabellist
\includegraphics[width=0.9\textwidth]{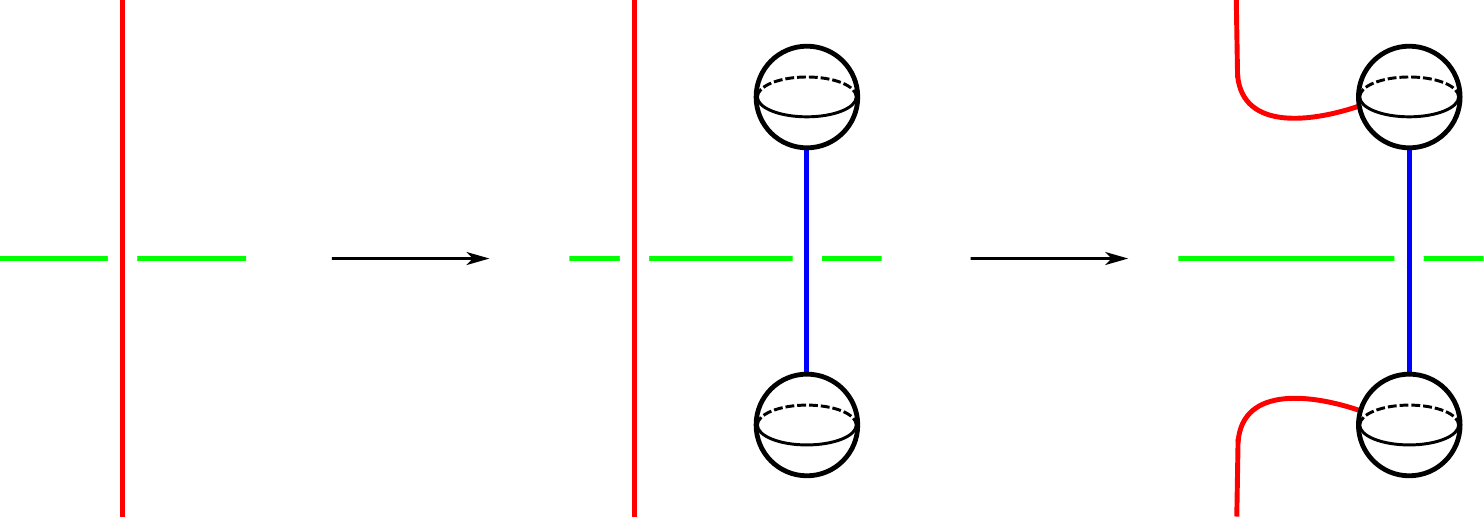}
\caption{Elimination of a bad crossing}
\label{f:stab}
\end{figure}
By a stabilization as above we introduce a cancelling pair near $c$, so that the attaching circle 
of the new $2$-handle, which we call $L_{k+1}$, has relative framing  $0$ and passes over $L_j$ 
once, as in the center picture of Figure~\ref{f:stab}. Sliding $L_i$ over $L_{k+1}$ we eliminate 
the bad crossing between $L_i$ and $L_j$, replacing it with a good crossing between $L_j$ and 
$L_{k+1}$, as in the right picture of Figure~\ref{f:stab}. The same argument works for self 
intersections, i.e.~when $i=j$. We can repeat this procedure until we get a diagram with no bad 
crossings, so that each component of $L$ can be assumed to sit on a horizontal copy of $\Si$.

Now we proceed to adjust the framings. In order to do that we use the Kirby calculus 
operation of twisting a $1$-handle by $360$ degrees, for which we refer 
to~\cite[Section~5.4]{GS99}. Figure~\ref{f:tw} illustrates the fact that 
a $360$-degree twist of one of the two attaching spheres of a $1$-handle changes 
the relative framings of the $2-$handles going over it. 
\begin{figure}[ht]
\centering
\labellist
\hair 2pt
\pinlabel $0$ at 23 70
\pinlabel $f$ at 23 155
\pinlabel $f$ at 152 156
\pinlabel $f$ at 287 156
\pinlabel $f-1$ at 430 156
\pinlabel $1$ at 420 70 
\endlabellist
\includegraphics[width=0.9\textwidth]{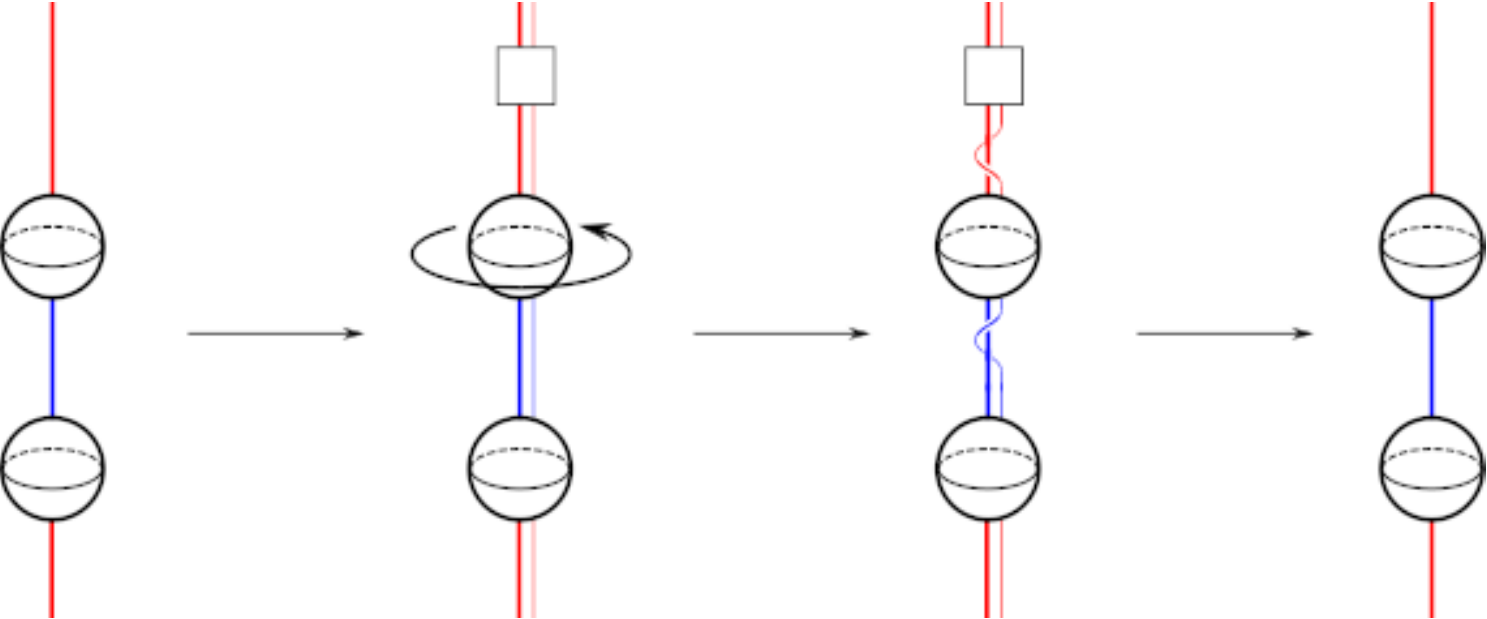}
\caption{Twisting a $1$-handle. The relative framing  of a knot can be represented by either an integer or a longitude: in the first and last step we are just switching from a notation to the other, while in the middle we are twisting a sphere as indicated by the arrow. Of course, the twist can also be performed in the opposite direction: in that case, the new framings would be $f+1$ and $-1$.}
\label{f:tw}
\end{figure}
If we do this for all the $1$-handles of the cancelling pairs introduced to eliminate bad crossings, we can change the relative framings of the cancelling $2$-handles from $0$ to $\pm 1$. At that point, if $2$-handles with relative framing  not equal to $\pm 1$ are still present, we can first perform more 
stabilizations and handle slides as in Figure~\ref{f:stab} -- just imagine removing $L_j$ from the pictures. 
Then, apply twists with suitable signs as in Figure~\ref{f:tw}. Every time, we introduce a new $\pm 1$-framed 
$2$-handle and change by $\mp 1$ the relative framing  of an already existing $2$-handle. We can clearly keep 
going like this until all the relative framings are $\pm 1$, therefore so far we have ensured that $D_L$ 
satisfies properties $(1)$, $(2)$ and $(4)$ above. 
If a component of the diagram happens to be homologically trivial, we can make it homologically non-trivial 
with a single stabilization. In fact, consider the first two pictures from the left in Figure~\ref{f:stab} 
without the component $L_i$ and choosing the framing on $L_{k+1}$ so that 
its relative framing  is $\pm 1$. If $L_j$ is homologically trivial in the first picture, it is certainly not 
so in the second picture, and its relative framing  has not changed. This shows that after possibly performing 
a few more stabilizations Conditions $(1)-(4)$ can all be satisfied. 
Moreover, we made sure that the genus of 
$\Si$ is $u + g_H(\del_- X)$ at each step. This concludes the proof of Theorem~\ref{t:exist}. 
\medskip

As anticipated in Remark~\ref{r:tunnel}, we now sketch an alternative proof of the existence of 
handlebody decompositions suggested by the referee. The idea is based on a construction 
by B.~Clark~\cite{Cl80} showing that any link has finite tunnel number. Let $L\subset\del_-^u X$ 
be a framed link. As before, up to isotopy we may assume that $L$ sits in a 
neighborhood $N\cong\Si\x [0,1]$ of a Heegaard surface $\Si\subset\del_-^u X$ and  
that we have fixed a projection $\pi\co N\to \Si$ providing us with a diagram $D_L\subset\Si$. 
We may also assume that $D_L$ is in general position with respect to the set 
$\{\al_1,\ldots,\al_g\}\subset\Si$ of boundaries of a maximal set of disjoint compressing disks for 
the handlebody bounded by $\Si\x\{0\}$. For each crossing of $D_L$ and $D_L\cap(\cup_{i=1}^g\al_i)$ 
we add a ``vertical segment'' to $L\cup_{i=1}^g\al_i$, obtaining an embedded trivalent graph
$\Ga$ with the property that the complement in $\del_-^u X$ of a regular neighborhood $N(\Ga)$ 
is a handlebody. A pushed-off copy $L'$ of $L$ sits on the Heegaard surface $\del N(\Ga)$ in such a way 
that each connected component of $L'$ is homologically non-trivial, and its relative framing can be 
adjusted by an isotopy which adds to it an appropriate number of ``twists'' around $\Ga$. This shows that the 
the whole framed link $L$ sits horizontally on a Heegaard surface for $\del_-^u X$ having genus  
equal to $g$ plus the number of crossings of $D_L$ and $D_L\cap(\cup_{i=1}^g\al_i)$. It follows from 
this argument that every smooth cobordism as in Theorem~\ref{t:exist} admits a horizontal 
decomposition of type $(g,u,\ell,h)$ for some $g\geq u$. Note that, although this alternative proof gives a stronger conclusion on $L$, one loses control on the difference $g-u$. 

\section{Proof of theorem~\ref{t:type10} -- $\chi(X)=0$}\label{s:type10-chi=0}

We state and prove Lemma~\ref{l:heegaard} below in slightly greater generality than needed, for possible 
future use. The construction used in the lemma is essentially the same one used by Lickorish in 
his proof of the Lickorish--Wallace theorem~\cite{Li62}.

\begin{lemma}\label{l:heegaard}
Let $H_g \cup (\Sigma_g \times [0,1]) \cup H'_g$ be a Heegaard decomposition of a closed 
$3$-manifold $N$. Let $L \subset \Sigma_g \times [0,1]$ be a horizontal framed link 
with monodromy $m_L \in \Map(\Si_g)$ and associated cobordism $X_L\co \del_- X_L\to\del_+ X_L$. 
Let $\umu = (\mu_1,\ldots,\mu_g)$ and $\ula=(\la_1,\ldots,\la_g)$ be $g$-tuples of simple closed curves in 
$\Si_g$ bounding compressing discs in $H_g$, $H'_g$ respectively, 
so that $(\umu,\ula)$ is a Heegaard diagram for $N$. 
Then, $(m_L(\umu),\ula)$ is a Heegaard diagram for $\del_+ X_L$, where $m_L(\umu)$ denotes the 
$g$-tuple $(m_L(\mu_1),\ldots,m_L(\mu_g))$.
\end{lemma}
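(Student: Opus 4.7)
The plan is to induct on the number $k$ of components of $L$, reducing to the single-curve case which embodies the Lickorish twist principle. Writing $L_1 \subset \mathfrak{S}_1$ for the lowest component, the $4$-dimensional $2$-handle attached along $L_1$ lies entirely inside the collar $(\Sigma_g \times [0, t_1 + \ep]) \times I$ of $N$, so after the corresponding surgery the upper slab $\Sigma_g \times [t_1 + \ep, 1]$ persists unchanged in $\del_+ X_{L_1}$ and still contains $L_2, \ldots, L_k$ as a horizontal link of length $k-1$ with respect to a new genus-$g$ Heegaard splitting $\del_+ X_{L_1} = H_g^{(1)} \cup (\Sigma_g \times [t_1+\ep,1]) \cup H'_g$. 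Granting the base case, the starting Heegaard diagram for this new splitting is $(\tau_1^{\delta_1}(\umu), \ula)$, and the inductive hypothesis applied to $L_2 \cup \cdots \cup L_k$ then produces exactly $(\tau_k^{\delta_k} \cdots \tau_2^{\delta_2} \tau_1^{\delta_1}(\umu), \ula) = (m_L(\umu), \ula)$.

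The base case $k=1$ is Lickorish's twist principle from his proof of the Lickorish--Wallace theorem: given a Heegaard splitting $N = H_g \cup_{\Sigma_g} H'_g$ and a simple closed curve $\gamma \subset \Sigma_g$, Dehn surgery on $\gamma$ (viewed as a knot in $N$) with framing differing from $\fr_{\Sigma_g}(\gamma)$ by $-\delta \in \{\pm 1\}$ yields a $3$-manifold with a genus-$g$ Heegaard splitting whose handlebodies are still $H_g$ and $H'_g$, but glued by the original identification precomposed with $\tau_\gamma^{\delta}$ on the $\del H_g$ side. In terms of Heegaard diagrams, this replaces $(\umu, \ula)$ by $(\tau_\gamma^{\delta}(\umu), \ula)$; applied to $L_1$, whose relative framing is $-\delta_1$ by the very definition of $\delta_i$, it gives $(\tau_1^{\delta_1}(\umu), \ula)$. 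Geometrically, the content is that the upper collar of the middle slab is undisturbed by the surgery, so the $\lambda$'s (which live near $\del H'_g$) stay put, while the $\mu$'s (which live on $\del H_g$) have to be dragged up through the surgery region and pick up the Dehn twist $\tau_1^{\delta_1}$.

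The twist principle itself I would prove by a local computation near $\gamma$: take a tubular neighborhood $\gamma \times D^2$ inside a collar $\Sigma_g \times [1/2 - \eta, 1/2 + \eta]$, describe explicitly the surgery re-gluing on its boundary torus in the basis given by $\gamma$ and its $\fr_{\Sigma_g}$-parallel push-off, and check that the resulting $3$-manifold contains a mapping cylinder of $\tau_\gamma^{\delta}$ on $\Sigma_g$ whose absorption into $H_g$ produces the claimed twisted identification. The step I expect to cause the most trouble is pinning down the exact sign of the exponent so that it is compatible with the orientation and framing conventions implicit in the paper's definition of $\delta_i$; this is routine but error-prone. Once that sign is nailed down, the iteration for general $k$ is mechanical.
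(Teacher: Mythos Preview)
Your proposal is correct and rests on the same idea as the paper --- Lickorish's correspondence between $\pm 1$-framed surgery on a curve in a Heegaard surface and regluing by a Dehn twist. The only difference is packaging: you induct on the number of components and reduce to the single-twist case, whereas the paper asserts in one line that $\del_+ X_L = H_g \cup_{m_L} (\Si_g \times [0,1]) \cup_{\id} H'_g$ and then simply writes down the new compressing disks $\De'_i = \De_i \cup m_L(\mu_i)\times [0,1]$ in $H_g \cup_{m_L}(\Si_g\times[0,1])\cong H_g$. Your inductive argument is exactly what justifies that one-line identification, so the two proofs are the same in substance; the paper's version is shorter because it treats the composed monodromy at once rather than one twist at a time.
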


\begin{proof}
Given any orientation-preserving diffeomorphism $f\co\Si_g\to\Si_g$ we can can glue $\Si_g\x [0,1]$ to $H_g$ via $f$: 
\[
H_g\cup_f\left(\Si_g\x [0,1]\right) := H_g\cup \left(\Si_g\x [0,1]\right)/
\left(x\in\del H_g=\Si_g\sim f(x)\in\Si_g\x\{0\}\right).
\]
The result is diffeomorphic to $H_g$ via the map 
\[\
\psi = \id_{H_g}\cup (f^{-1}\x\id_{[0,1]})\co H_g\cup_f\left(\Si_g\x [0,1]\right) 
\to H_g\cup_{\id_{\Si_g}} \left(\Si_g\x [0,1]\right)\cong H_g.
\]
Let $\De_i\subset H_g$ be a compressing disk with $\del\De_i=\la_i$, 
$i=1,\ldots, g$. Then, 
\[
\del_+ X_L = H_g\cup_{m_L}\left(\Si_g\x [0,1]\right)\cup_{\id_{\Si_g}} H'_g
\]
and $\De'_i = \De_i\cup m_L(\mu_i)\x [0,1]$ is a compressing disk 
in $H_g\cup_{m_L}\left(\Si_g\x [0,1]\right)\cong H_g$ with 
$\del\De'_i = m_L(\mu_i)$. Therefore $(m_L(\umu),\la)$ is a Heegaard diagram 
for $\del_+ X_L$.
\end{proof}

The following proposition implies the first part of Theorem~\ref{t:type10}.
\begin{prop}\label{p:cob-type10} 
Let $X\co S^3\to \del_+ X$ be a smooth, oriented cobordism with a horizontal decomposition of type $(1,u,\ell,0)$, having Euler characteristic $\chi(X)=0$ and $b_1(\del_+ X) = 0$. Then, $u\in\{0,1\}$. 
If $u=0$ then $X\cong S^3\x [0,1]$, if $u=1$ 
and the monodromy is $\tau_\ga^\de$, $X$ 
is orientation-preserving diffeomorphic to 
$\pm B_{p,q}\setminus B^4$ for some $p>q\geq 0$, with the plus sign occurring if and only 
if $(p,q)=(1,0)$ or $\de = 1$. 
In particular, $\del_+ X = S^3$ if and only if $X$ is diffeomorphic to $S^3\x [0,1]$. 
\end{prop}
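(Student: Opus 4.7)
The plan is to combine a handle count with Lemma~\ref{l:heegaard} and then match the resulting handlebody for $X$ with the standard Kirby diagram of $B_{p,q}$. Since $X$ has no $0$-, $3$-, or $4$-handles, $\chi(X)=-u+\ell$, so $\chi(X)=0$ forces $\ell=u$. The inequality $g\ge\max(u,h)$ recorded after Definition~\ref{d:hfl-hhd}, with $g=1$ and $h=0$, then gives $u\in\{0,1\}$. If $u=0$ there are no handles at all and $X\cong S^3\times[0,1]$, which also handles the ``only if'' direction of the final sentence on that side.

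Now suppose $u=1$, so $\ell=1$. The $1$-handle produces $\del_-^1 X\cong S^1\times S^2$, and the $2$-handle is attached along a simple closed, homotopically nontrivial curve $\ga$ on a Heegaard torus $T\subset S^1\times S^2$, with framing differing from the surface framing by $-\de$. In a basis $(L,M)$ of $H_1(T)$ with $M$ the common meridian of the two solid tori, so that $(M,M)$ is a Heegaard diagram for $S^1\times S^2$, write $\ga=aL+bM$ with $\gcd(a,b)=1$. By Lemma~\ref{l:heegaard}, $(\tau_\ga^\de(M),M)$ is a Heegaard diagram of $\del_+X$, and the formula $\tau_\ga^\de(M)=M+\de a\ga=\de a^2 L+(1+\de ab)M$ yields $|H_1(\del_+ X)|=a^2$. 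The assumption $b_1(\del_+ X)=0$ therefore rules out $a=0$; setting $p:=|a|$ identifies $\del_+ X$ with a lens space of the form $L(p^2,q')$, from which a representative $q\in\{0,\ldots,p-1\}$ for the second parameter of $B_{p,q}$ is read off from the class of $\pm b$ modulo $p$ (after accounting for the symmetries $L(n,q)=L(n,q^{-1})$ and $B_{p,p-q}\cong B_{p,q}$).

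To upgrade the boundary identification to a diffeomorphism $X\cong\pm B_{p,q}\setminus B^4$, I would draw the Kirby diagram of $X$ as a dotted unknot (the $1$-handle) together with the $2$-handle attached along the $(a,b)$-curve on a Heegaard torus surrounding it, with framing $\fr_T(\ga)-\de$; a short sequence of handle slides and isotopies on the torus should convert this into the standard Fintushel--Stern picture of $B_{p,q}$. The orientation-reversal symmetry arises because $\tau_\ga^{-1}$ produces the orientation-reversed $4$-manifold of $\tau_\ga$; a direct check on the Kirby diagram should show that when $(p,q)\ne(1,0)$ the sign is $+$ iff $\de=1$, while for $(p,q)=(1,0)$ we have $B_{1,0}\setminus B^4\cong S^3\times[0,1]$ and the sign is immaterial. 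Finally, $\del_+ X=S^3$ forces $p^2=1$, hence $(p,q)=(1,0)$ and $X\cong S^3\times[0,1]$. The main obstacle I expect is the sign determination: the lens space identification only pins down the boundary up to orientation, so to decide whether $X\cong B_{p,q}\setminus B^4$ or $-B_{p,q}\setminus B^4$ I must work with the Kirby diagrams themselves and carefully track how the handle slides interact with the flip $\de\mapsto-\de$.
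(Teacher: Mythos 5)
Your skeleton matches the paper's: the handle count forcing $\ell=u\in\{0,1\}$, the reduction to a single curve $\ga$ on the Heegaard torus of $S^1\times S^2$, the use of Lemma~\ref{l:heegaard} to dispose of the case where $\ga$ is a meridian of the dotted circle (your $a=0$), and the final observation that $\del B_{p,q}=S^3$ only for $(p,q)=(1,0)$. But the two load-bearing steps are exactly the ones you defer with ``should convert'' and ``should show''. Computing $|H_1(\del_+X)|=a^2$ and recognizing $\del_+X$ as a lens space $L(p^2,q')$ identifies only the boundary; a $4$-dimensional rational homology ball is not determined by its boundary, so nothing so far distinguishes $X$ from, say, a different filling of the same lens space, and the passage from ``$\del_+X=\del B_{p,q}$'' to ``$X\cong\pm B_{p,q}\setminus B^4$'' is the actual content of the proposition. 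The paper closes this not by manipulating the diagram into the Fintushel--Stern picture from scratch, but by quoting the known Kirby presentation of $B_{p,q}$ (from \cite[Section~3.2]{LM14}): a dotted unknot together with a $2$-handle attached along the curve $p\mu-q\la$ on the Heegaard torus with framing $pq-1$. To match your decomposition to that model one needs the computation $\fr_T(\ga)=pq$ (via the linking number of $\ga$ with its push-off off the torus), after which the horizontal framing $\fr_T(\ga)-\de=pq-\de$ lines up with $pq-1$ precisely when $\de=1$, and the substitution $\ga=p\mu+q\la=p(\mu+\la)-(p-q)\la$ converts the $+q$ into the $-q'$ of the model at the cost of replacing $q$ by $p-q$ (harmless since $B_{p,q}\cong B_{p,p-q}$). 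Your proposal contains none of this framing bookkeeping, which is where a sign error would actually hide.

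The second gap is the ``if and only if'' on the sign. The paper gets $\de=-1\Rightarrow X\cong -B_{p,q}\setminus B^4$ by mirroring the whole Kirby diagram (which flips $\fr(\ga)$, $\fr_T(\ga)$ and hence $\de$), but to make the dichotomy an equivalence one must also prove that $B_{p,q}$ is \emph{not} orientation-preservingly diffeomorphic to $-B_{p,q}$ when $p>1$; otherwise the sign would be immaterial for all $(p,q)$, not just $(1,0)$. The paper does this by comparing the oriented boundaries $L(p^2,pq-1)$ and $L(p^2,p^2-pq+1)$ and checking that $p^2-pq+1\not\equiv(pq-1)^{\pm1}\bmod p^2$ for $p>1$. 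You flag the sign as ``the main obstacle'' but offer no mechanism for resolving it; this lens-space congruence is the missing ingredient.
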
 

\begin{proof} 
Since $g=1\geq u$ and $0=\chi(X) = \ell - u$, 
we have $\ell=u\in\{0,1\}$. If $u=\ell=0$ clearly $X\cong S^3\x [0,1]$. If $u=\ell=1$ 
the decomposition of $X$ 
consists of a $1$-handle and a $2$-handle attached along a simple closed curve 
$\ga\subset\del_+(S^1\x D^3\setminus B^4) = S^1\x S^2$  
sitting on the standard genus-$1$ Heegaard torus $T$. 
In this case the monodromy is simply $\tau_\ga^\de$ with $\de\in\{\pm 1\}$ and it 
coincides with its factorization. 
After choosing a suitable orientation we may assume that $\ga$ supports the homology class 
$p\mu+q\la\in H_1(T;\Z)$ with $p\geq 0$ and $p$, $q$ coprime, where $(\mu,\la)$  
is a pair of oriented, simple closed curves such that $\mu\cdot\la = 1$. Moreover, 
we can choose $\la$ as the boundary of a compressing disc 
in the solid torus $H_1$ and we are free to replace  
$\mu$ with $\mu + m\la$ for any $m\in\Z$. Thus, 
we may assume either $pq\neq 0$ or $(p,q)\in\{(1,0),(0,1)\}$. 
Clearly, if $(p,q) = (1,0)$ then $\ga = \mu$ and 
$X\cong B_{1,0}\setminus B^4 \cong S^3\x [0,1]$, while if $(p,q) = (0,1)$ then 
$\ga=\la$ and by Lemma~\ref{l:heegaard} $\del_+ X = S^1\x S^2$, 
therefore $b_1(\del_+ X)\neq 0$. 
Therefore, we may assume $pq\neq 0$. If we view $S^1\x S^2$ as $0$-surgery on an unknot in $S^3$, 
we can think of $\ga$ as sitting inside $S^3$. Then, the Seifert framing induced by $T$ on $\ga$ 
is equal to $\fr_T(\ga) = pq$. To see this it suffices to compute the linking number 
of $\ga$ with its push-off in the direction of the negative normal to $T$. 
Since $\ga = p\mu + q\la$, we have $\fr_T(\ga) = \lk(\ga,p\mu^- + q\la^-) = \lk(\ga,q\la^-) = pq$. 
Up to replacing $q$ with its remainder modulo $p$ and $\mu$ with $\mu$ plus a multiple of $\la$, 
we may assume $p>q>0$. If $X_L$ has factorization $(\tau_\ga^\de)$ the 
framing of $\ga$ prescribed by the $2$-handle is $\fr(\ga) = pq - \de$. 
By~\cite[Section~3.2]{LM14}, $X$ is orientation-preserving diffeomorphic to 
$B_{p,q}\setminus B^4$ if $\de =1$, $q>0$ and $\ga=p\mu-q\la$.  
Therefore, $\ga = p\mu + q\la = p(\mu+\la) - (p-q)\la$, so that 
$X$ is orientation-preserving diffeomorphic to 
$B_{p,p-q}\setminus B^4$. Note that   
taking the mirror image of the resulting Kirby calculus picture changes the sign of $\fr(\ga)$ and $\fr_T(\ga)$, 
therefore the sign of $\de$ as well. Thus, if $\de=-1$ 
after changing orientation the same argument shows that $-X$ is orientation-preserving diffeomorphic to 
$B_{p,q}\setminus B^4$, hence $X\cong -B_{p,q}\setminus B^4$. To conclude the proof it suffices to observe  
that $B_{p,q}$ is orientation-preserving diffeomorphic to $-B_{p,q}$ if and only if $p=1$. Indeed, the corresponding 
oriented boundaries $\del B_{p,q}=L(p^2,pq-1)$ and $\del(-B_{p,q})=L(p^2,p^2-pq+1)$ are orientation-preserving 
diffeomorphic if and only if $p^2-pq+1\equiv (pq-1)^{\pm 1}\bmod p^2$, which never holds if $p>1$. 
The last portion of the statement follows from the fact that $\del B_{p,p-q} = \del B_{p,q}$ is $S^3$ if and only if $B_{p,q}=B_{1,0}=B^4$. 
\end{proof} 

\section{Proof of Theorem~\ref{t:type10} -- $\chi(X)=1$}\label{s:type10-chi=1}

The proof is organized as follows. 
In Section~\ref{ss:firststep} we start the analysis of a smooth, oriented, 
cobordism $X\co S^3\to S^3$ having 
Euler characteristic $\chi(X)=1$. 
We show that each decomposition determines a triple of integers which is a solution 
of a Diophantine equation satisfying certain extra constraints. 
In Section~\ref{ss:secondstep} we completely determine 
the sets of all such triples. In Section~\ref{ss:hurwitz} we show that applying 
Hurwitz moves to the factorization of a horizontal framed link, the associated horizontal 
decomposition changes by a sequence of handle slides while staying horizontal. Finally, in 
Section~\ref{ss:laststep} we put everything together to finish the proof. 

\subsection{First step}\label{ss:firststep}  

In this section we start the analysis of a smooth, oriented, cobordism $X\co S^3\to S^3$  with Euler characteristic $\chi(X)=1$ 
having a horizontal decomposition of type $(1,u,\ell,0)$. Since the number $u$ of 1-handles satisfies $u\leq g=1$, there is either one or no 1-handles and no $3$-handles. 

If the decomposition contains no 1-handles then $\chi(X)=1$ implies that there is a single $2$-handle attached along a simple closed curve $\ga\subset S^3$ sitting on a genus-$1$ Heegaard torus $T$. 
Suppose that, with a suitable orientation, $\ga$ supports the homology class $p\mu+q\la\in H_1(T;\Z)$ 
with $p\geq 0$ and $p$, $q$ coprime, where $(\mu,\la)$ is a pair of oriented, 
simple closed curves such that $\mu\cdot\la = 1$. We can choose $\la$ as the boundary of a compressing disc 
in the solid torus $H_1$ and we are free to replace $\mu$ with $\mu + m\la$ for any $m\in\Z$. 
Therefore, we may assume either $p>q>0$ or $(p,q)\in\{(1,0),(0,1)\}$. Since the 2-handle is attached to $S^3$ with 
framing $\pm 1$ with respect to the framing $pq$ induced by $T$ and $H_1(S^3;\Z)=0$, we must have have $|pq\pm 1|=1$. 
The only possibilities are $(2,1)$, $(1,0)$ and $(0,1)$ and in every case we clearly have 
$X\cong \pm\CP^2\setminus\{B^4\cup B^4\}$. This concludes the proof when $\chi(X)=1$ if there are no 1-handles. 

From now on, we assume that the decomposition contains one 1-handle. 
In the notation of Section~\ref{s:intro}, the $2$-handles define a cobordism $X_L\co S^1\x S^2\to S^3$, where $L\subset S^1\x S^2$ 
is a link which is horizontal with respect to the standard genus-$1$ Heegaard decomposition. 
The assumption $\chi(X)=1$ implies that $L$ has two 
components $L_1, L_2$ sitting on parallel copies of the standard Heegaard torus $T \subset S^1\x S^2$. Let $N \cong T \times [0,1]$ 
be a regular neighborhood of $T$ containing $L$, so that $S^1\x S^2 = H_1 \cup N \cup H'_1$, and 
let $\pi\co N\to T$ be the projection map. Let $\ga_1 = \pi(L_1)$ and $\ga_2=\pi(L_2)$. By definition of horizontal decomposition, 
$\ga_1$ and $\ga_2$ are non-separating simple closed curves. Setting $\tau_i := \tau_{\ga_i}$, the factorization of $L$ is given by 
\begin{equation}\label{e:2factor}
F_L = (\tau_2^{\de_2},\tau_1^{\de_1}), 
\end{equation}
where $\de_i\in\{\pm 1\}$ is equal to minus the relative framing  of $L_i$ and 
the monodromy of $L$ is $m_L = \tau_2^{\de_2}\tau_1^{\de_1}$. 

Let $\la\subset T=\del H_1$ be a simple closed curve which bounds a disc in $H_1$.
We are going to show that, assuming the curves are oriented, the integers 
$\ga_1\cdot\la$, $\ga_2\cdot\la$ and $\ga_2\cdot\ga_1$ 
satisfy a certain Diophantine equation. In Section~\ref{ss:secondstep} we determine the solutions 
of the equation and in Section~\ref{ss:laststep} we use that knowledge together with the results of 
Section~\ref{ss:hurwitz} to finish the proof of Theorem~\ref{t:type10}.

We want to apply Lemma~\ref{l:heegaard} to our cobordism $X_L\co S^1\x S^2\to S^3$. As before, 
let $\la\subset T=\del H_1$ be 
the arbitrarily oriented boundary of a 
compressing disc in $H_1$. Then, $(\la,\la)$ is a Heegaard diagram for $S^1\x S^2$ and it follows from Lemma~\ref{l:heegaard} that 
$(m_L(\la),\la)$ is a Heegaard diagram for $\del_+ X_L = S^3$, which immediately implies 
\begin{equation}\label{e:inters}
|\la\cdot m_L(\la)| = 1.
\end{equation}  
From now on we assume $\ga_1$ and $\ga_2$ oriented and we 
abuse notation by denoting with $\ga_1$ and $\ga_2$ also the corresponding homology classes in $H_1(T;\Z)$. Define 
\[
x := \ga_1\cdot\la,\quad y := \ga_2\cdot\la\quad\text{and}\quad n := \ga_2\cdot\ga_1.
\]
Note that $\tau_1$ and $\tau_2$ do not depend on the orientations of $\ga_1$ and $\ga_2$. Since  
\[
m_L(\la) = \tau_2^{\de_2}(\tau_1^{\de_1}(\la)) = 
\tau_2^{\de_2}(\la + \de_1 x \ga_1) = 
\la + \de_1 x \ga_1 + \de_2 y \ga_2 + \de_1\de_2 n x \ga_2, 
\]
taking the intersection product of both sides with $\de_1\de_2\la$ yields  
\begin{equation}\label{e:quadratic} 
\de_2 x^2 + \de_1 y^2 + nxy = \ep,
\end{equation}
where $\ep = \de_1\de_2 m_L(\la)\cdot\la$ and by Condition~\eqref{e:inters} $|\ep|=1$.
Note that $\ep$ is independent of the orientations of $\la$, $\ga_1$ and $\ga_2$. 

\begin{defn}\label{d:S-set}
Define $S^{\de_2,\de_1}_{n,\ep}$ to be the set 
of pairs $(x,y)\in\Z^2$ such that $(x,y)$ is a solution of Equation~\eqref{e:quadratic} and satisfies the extra condition 
\begin{equation}\label{e:extra-cond}
n = ay - bx \quad\text{for some $a,b\in\Z$ with 
$\gcd(x,a)=\gcd(y,b)=1$}. 
\end{equation}
\end{defn}
Observe that a pair $(x,y)$ associated to a horizontal decomposition must satisfy 
Condition~\eqref{e:extra-cond}, because if $(\mu,\la)$ is a symplectic basis of $H_1(T;\Z)$ with $\mu\cdot\la = 1$ 
and we write $\ga_1=x \mu + a \la$, $\ga_2=y \mu + b \la$, we must have $\gcd(x,a)=\gcd(y,b)=1$ and $n=\ga_2 \cdot \ga_1=ay-bx$.

\begin{rmk}\label{r:equiv-extra-cond}
It easily follows from Equation~\eqref{e:quadratic} that $x$ and $y$ are coprime.  
Moreover, Condition~\eqref{e:extra-cond} is equivalent to $\gcd(x,n)=\gcd(y,n)=1$. 
Indeed,
\begin{itemize}
    \item if Condition~\eqref{e:extra-cond} holds then $\gcd(x,n)=\gcd(x,ay-bx)=\gcd(x,ay)=1$, 
    and similarly $\gcd(y,n)=1$;
    \item conversely, suppose $\gcd(x,n)=\gcd(y,n)=1$. By Bezout's theorem there are $a,b\in\Z$ with $n=ay-bx$. Then, 
    we have $\gcd(x,a)=1$, otherwise a prime factor of both $x$ and $a$ would also divide $n$, 
    contradicting $\gcd(x,n)=1$. Similarly, $\gcd(y,b)=1$.
\end{itemize}
\end{rmk}

We analyze the sets $S^{\de_2,\de_1}_{n,\ep}$  
in Section~\ref{ss:secondstep} to prove Theorem~\ref{t:type10} and in Section~\ref{s:disj-emb}
to prove Theorem~\ref{t:precise-disj-emb}.

\subsection{Second step}\label{ss:secondstep} 
In this section we determine the set $S^{\de_2,\de_1}_{n,\ep}$ 
of Definition~\ref{d:S-set}. 
Note that if $x$ and $y$ solve Equation~\eqref{e:quadratic} 
then they are are necessarily coprime.
Fix a a curve $\mu$ with $\mu\cdot\la = 1$. Then, we have 
\[
\ga_1 = p_1\mu + q_1\la,\quad \ga_2 = p_2\mu+q_2\la 
\]
for some $p_i, q_i$ with $(p_i,q_i)=1$ for $i=1,2$. Moreover, 
\begin{equation*}
x = \ga_1\cdot\la = p_1,\quad y = \ga_2\cdot\la = p_2\quad\text{and}\quad 
n = \ga_2\cdot\ga_1 = p_2 q_1 - p_1 q_2.
\end{equation*}
\begin{rmk}\label{r:xy=0}
Observe that if $x=0$ then  Equation~\eqref{e:quadratic} implies $|y|=|p_2|=1$, and since 
$\ga_1=\pm\la$ we have $|q_1|=1$, therefore $|n|=|\ga_2\cdot\ga_1| = |p_2 q_1| = 1$. Similarly, 
$y=0$ implies $|n|=1$. 
\end{rmk}

Given $(x,y)\in S^{\de_2,\de_1}_{n,\ep}$, define 
\begin{equation}\label{e:mutations}
\hat x := -x-n\de_2 y\quad\text{and}\quad \hat y:=-y-n\de_1 x.
\end{equation}
We call the pairs $(\hat x, y)$ and $(x,\hat y)$ 
{\em mutations} of $(x,y)$. 
Moreover, let $m^{\de_2,\de_1}_{n,\ep}\in\N$ be the minimum of the function $S^{\de_2,\de_1}_{n,\ep}\to\N$ 
given by $(x,y)\mapsto |xy|$. We define the {\em bottom subset} 
$b(S^{\de_2,\de_1}_{n,\ep})\subset S^{\de_2,\de_1}_{n,\ep}$ as the set of 
pairs $(x,y)$ where the minimum $m^{\de_2,\de_1}_{n,\ep}$ is attained:  
\[
b(S^{\de_2,\de_1}_{n,\ep}) := 
\left\{(x,y)\in S^{\de_2,\de_1}_{n,\ep}\ |\ |xy| = m^{\de_2,\de_1}_{n,\ep}\right\}
\subset S^{\de_2,\de_1}_{n,\ep}.
\]
\begin{lemma}\label{l:mutations}  
If $(x,y)\in S_{n,\ep}^{\de_2,\de_1}$, both mutations of $(x,y)$ belong to $S_{n,\ep}^{\de_2,\de_1}$. Moreover, 
\begin{enumerate}
\item[(1)]
there is a finite sequence of mutations which sends $(x,y)$ into $b(S^{\de_2,\de_1}_{n,\ep})$; 
\item[(2)] 
if $S^{\de_2,\de_1}_{n,\ep}\not=\emptyset$ then $|n|\in\{1,3\}$ and 
$m^{\de_2,\de_1}_{n,\ep}=(|n|-1)/2$.
\end{enumerate}
\end{lemma}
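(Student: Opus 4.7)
The plan is to apply Vieta's formulas to Equation~\eqref{e:quadratic}, viewed as a quadratic in $x$ with integer coefficients and unit leading coefficient $\de_2$: the roots $x$ and $\hat x$ satisfy $x+\hat x=-n\de_2 y$ and $x\hat x=\de_1\de_2 y^2-\de_2\ep$, and symmetrically for $y$ and $\hat y$. Integrality of $\hat x$ and the fact that $(\hat x, y)$ solves Equation~\eqref{e:quadratic} are then immediate. The coprimality conditions, in the form of Remark~\ref{r:equiv-extra-cond}, transfer to the mutation because $\hat x\equiv -x\pmod n$ gives $\gcd(\hat x,n)=\gcd(x,n)=1$, and $\de_2\hat x^2\equiv\ep\pmod y$ gives $\gcd(\hat x,y)\mid\ep=\pm 1$, hence $=1$. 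A symmetric argument handles $(x,\hat y)$.

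For (1), I would run a descent on $\max(|x|,|y|)$. Vieta's product formula gives $|y\hat y|=|x^2-\de_2\ep|\le x^2+1$; when $|y|>|x|\ge 1$, integrality forces $|y|\ge|x|+1$, whence $y^2>x^2+1\ge|y\hat y|$ and $|\hat y|<|y|$. Thus, as long as $\min(|x|,|y|)\ge 1$ and $|x|\ne|y|$, mutating the larger coordinate strictly decreases $\max(|x|,|y|)$, so the process terminates after finitely many steps at a pair with either $\min(|x|,|y|)=0$ or $|x|=|y|$. In the first case, Remark~\ref{r:xy=0} forces $|n|=1$ and $|xy|=0$. In the second case, $\gcd(x,y)$ divides $\ep=\pm 1$, so $|x|=|y|=1$.

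For (2), substituting $|x|=|y|=1$ into Equation~\eqref{e:quadratic} yields $n(xy)=\ep-\de_1-\de_2\in\{-3,-1,1,3\}$ (the value $0$ is excluded because $\de_1+\de_2\in\{0,\pm 2\}$ can never equal $\ep=\pm 1$), so $|n|\in\{1,3\}$. Combining with the first terminal case, $|n|\in\{1,3\}$ whenever $S^{\de_2,\de_1}_{n,\ep}\ne\emptyset$. If $|n|=3$, then Remark~\ref{r:xy=0} excludes $|xy|=0$, so $m=1=(|n|-1)/2$ is realized at the terminal pair. The main obstacle is the case $|n|=1$: one must show that, even if the descent halts at $|x|=|y|=1$, a further mutation drives $|xy|$ to $0$. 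I would argue that in this configuration $\hat x,\hat y\in\{0,\pm 2\}$, and if both were nonzero then $y=n\de_1 x$ and $x=n\de_2 y$, forcing $\de_1=\de_2$, and substituting back yields $3\de_1=\ep$, contradicting $\ep\in\{\pm 1\}$; so at least one mutation produces a pair with $|xy|=0$, giving $m=0=(|n|-1)/2$.
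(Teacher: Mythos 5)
Your proof is correct and follows essentially the same strategy as the paper's: verify that mutations preserve the defining conditions (the paper checks $\gcd(\hat x,n)=\gcd(x,n)$ exactly as you do), run a descent that strictly shrinks the larger coordinate via the bound $|y\hat y|\le x^2+1$, and resolve the terminal cases $|xy|\le 1$, including the key observation that when $|n|=1$ and $|x|=|y|=1$ one further mutation must annihilate a coordinate because $\hat x,\hat y\ne 0$ would force $3\de_1=\ep$. The only cosmetic differences are your explicit use of Vieta's formulas and descending on $\max(|x|,|y|)$ rather than $|xy|$.
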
 

\begin{proof} 
The proof that $(\hat x,y)\in S_{n,\ep}^{\de_2,\de_1}$ 
is symmetric to the proof that $(x,\hat y)\in S_{n,\ep}^{\de_2,\de_1}$, so we 
provide the argument only for $(\hat x,y)$. One can easily check that 
if $(x,y)\in S_{n,\ep}^{\de_2,\de_1}$ then 
$(\hat x,y)$ is a solution of Equation~\eqref{e:quadratic}. 
Moreover, since $(x,y)\in S^{\de_2,\de_1}_{n,\ep}$, 
by Remark~\ref{r:equiv-extra-cond} we have $\gcd(x,n)=\gcd(y,n)=1$, 
which implies $\gcd(\hat x,n)=\gcd(-x-n \de_2 y, n) = \gcd(x,n)=1$, 
so that $(\hat x,y)\in S^{\de_2,\de_1}_{n,\ep}$ as well.
This proves the first part of the statement. 

Now observe that if $(x,y)\in S^{\de_2,\de_1}_{n,\ep}$ and $|x|=|y|$ then 
\[
x^2(\de_2 + \de_1 \pm n) = \ep,
\]
which implies $|x|=|y|=1$ and $|n|\in\{1,3\}$. Hence, if $(x,y)\in S^{\de_2,\de_1}_{n,\ep}$ 
and $|xy|>1$ we must have $|x|\neq |y|$. To prove $(1)$ we are going to use infinite descent to reduce to the case $|xy|\leq 1$. We claim that if $|x| > |y|$ then $|\hat x|<|x|$, while if $|x|<|y|$ then $|\hat y|<|y|$. Since the arguments in the two cases are similar, we only go through the case $|x| > |y|$. In this case it suffices to observe that if 
$|\hat x|\geq |x|$ then, using~\eqref{e:quadratic} we get  
\[
y^2 + 1 \geq |\de_1 \de_2 y^2-\de_2 \ep| = |x\hat x| \geq x^2 \geq (|y|+1)^2 = y^2 + 2|y| + 1, 
\]
which is impossible because $y\neq 0$. Therefore $|\hat x|<|x|$ and the claim is established. Inducting on $|xy|$ we can apply a sequence of mutations until we obtain a solution $(x_0,y_0)$ with $|x_0 y_0| \leq 1$. 
If $|x_0 y_0| = 0$ clearly $m^{\de_2,\de_1}_{n,\ep}=0$ and $(x_0,y_0)\in b(S_{n,\ep}^{\de_2,\de_1})$, and by Remark~\ref{r:xy=0} we have $(x_0,y_0)\in\pm\{(1,0),(0,1)\}$ and $|n|=1$, so that $m^{\de_2,\de_1}_{n,\ep}=(|n|-1)/2$.
If $|x_0 y_0| = 1$ then we saw above that $|n|\in\{1,3\}$. Moreover, if $|n|=3$ then 
$m^{\de_2,\de_1}_{n,\ep} \neq 0$ (again by Remark~\ref{r:xy=0}) and $|x_0y_0|=1$ implies $m^{\de_2,\de_1}_{n,\ep}=1=(|n|-1)/2$ and $(x_0,y_0)\in b(S_{n,\ep}^{\de_2,\de_1})$.
If $|n|=1$ then $\hat x_0=0 \Leftrightarrow n \de_2 y_0=-x_0 \Leftrightarrow \de_2=-nx_0y_0$ and, similarly, $\hat y_0=0 \Leftrightarrow \de_1=-nx_0y_0$; however, we cannot have $\de_1 = \de_2 = nx_0 y_0$ because it is inconsistent with 
Equation~\eqref{e:quadratic}, so that either $\hat x_0=0$ or $\hat y_0=0$, which implies that $m^{\de_2,\de_1}_{n,\ep}=0=(|n|-1)/2$ and that a further mutation sends $(x,y)$ into $b(S_{n,\ep}^{\de_2,\de_1})$. This completes the proof.
\end{proof}

Observe that the map $(x,y)\mapsto (x,-y)$ defines a bijection between $S_{n,\ep}^{\de_2,\de_1}$ and $S_{-n,\ep}^{\de_2,\de_1}$, while 
$S_{n,\ep}^{\de_2,\de_1}=S_{n,-\ep}^{-\de_2,-\de_1}$. Therefore, it will suffice to determine $S^{\de_2,\de_1}_{n,-1}$ for $n\in\{1,3\}$.

\begin{lemma}\label{l:bottomsets} 
The following hold: 
\begin{enumerate}
\item[(1)]
if $S^{\de_2,\de_1}_{3,-1}\not=\emptyset$ then $\de_1=\de_2=1$ and
$b(S^{1,1}_{3,-1}) = \pm\{(1,-1)\}$; 
\item[(2)]
if $S^{\de_2,\de_1}_{1,-1}\not=\emptyset$ then $\de_1+\de_2\in\{0,-2\}$; 
\item[(3)] 
$b(S^{-1,1}_{1,-1})= \pm\{(1,0)\}$;
\item[(4)]
$b(S^{1,-1}_{1,-1}) =  \pm\{(0,1)\}$; 
\item[(5)]
$b(S^{-1,-1}_{1,-1}) = \pm\{(1,0),(0,1)\}$.
\end{enumerate}
\end{lemma}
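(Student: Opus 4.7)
The plan is to exploit Lemma~\ref{l:mutations}: part (2) states that whenever $S^{\de_2,\de_1}_{n,\ep}$ is nonempty we have $|n|\in\{1,3\}$ and $m^{\de_2,\de_1}_{n,\ep}=(|n|-1)/2$, while part (1) says that every element can be mutated into the bottom subset without altering $\de_1$, $\de_2$, $n$ or $\ep$. Consequently, to constrain the signs $\de_i$ and to identify the bottom explicitly, it suffices to plug the candidate minimal pairs into Equation~\eqref{e:quadratic} with $\ep=-1$, keep those that also satisfy Condition~\eqref{e:extra-cond}, and check admissibility.

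For item (1), $n=3$ forces $|xy|=1$ on the bottom, so $x,y\in\{\pm 1\}$ and Equation~\eqref{e:quadratic} becomes $\de_2+\de_1+3xy=-1$. Since $\de_1+\de_2\in\{-2,0,2\}$ and $3xy\in\{\pm 3\}$, the only possibility is $xy=-1$ with $\de_1=\de_2=1$, yielding the two pairs $\pm(1,-1)$. Condition~\eqref{e:extra-cond} holds for both with, e.g., $(a,b)=(-1,-2)$, so $b(S^{1,1}_{3,-1})=\pm\{(1,-1)\}$.

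For items (2)--(5), $n=1$ forces $xy=0$ on the bottom, so by Remark~\ref{r:xy=0} every bottom element lies in $\pm\{(1,0),(0,1)\}$. Substituting $(\pm 1,0)$ into~\eqref{e:quadratic} yields $\de_2=-1$, and substituting $(0,\pm 1)$ yields $\de_1=-1$; in both cases Condition~\eqref{e:extra-cond} is trivially met (take $(a,b)=(0,-1)$ or $(a,b)=(1,0)$ respectively). Item (2) then follows because a nonempty bottom forces at least one of $\de_1,\de_2$ to equal $-1$. For (3), the choice $(\de_2,\de_1)=(-1,1)$ excludes $(0,\pm 1)$, which would require $\de_1=-1$, but admits $(\pm 1,0)$; (4) is the mirror argument; and for (5), with $(\de_2,\de_1)=(-1,-1)$ all four pairs $(\pm 1,0)$ and $(0,\pm 1)$ are admissible, yielding $b(S^{-1,-1}_{1,-1})=\pm\{(1,0),(0,1)\}$.

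I expect no substantial obstacle here: the heavy work was carried out in Lemma~\ref{l:mutations}, which reduces the problem to a finite and very short case analysis on candidate minimal pairs. The only mild care needed is verifying Condition~\eqref{e:extra-cond} for each surviving pair, which in every case amounts to exhibiting one explicit choice of $(a,b)$.
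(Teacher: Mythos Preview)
Your proposal is correct and follows essentially the same approach as the paper: both reduce, via Lemma~\ref{l:mutations}, to checking which pairs $(x,y)$ with $|xy|=(|n|-1)/2$ satisfy Equation~\eqref{e:quadratic} with $\ep=-1$, and then read off the constraints on $\de_1,\de_2$. You are slightly more explicit than the paper in verifying Condition~\eqref{e:extra-cond} for each surviving pair, which is a harmless addition since in every case one of $|x|$ or $|y|$ equals $1$ and the check is automatic.
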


\begin{proof}
By Lemma~\ref{l:mutations}, if $S^{\de_2,\de_1}_{n,-1}\not=\emptyset$ then $b(S^{\de_2,\de_1}_{n,-1})\not=\emptyset$ and $m^{\de_2,\de_1}_{n,-1}=(|n|-1)/2$. 
If $n=3$ then $m^{\de_2,\de_1}_{3,-1}=1$, therefore there is 
$(x,y)\in S^{\de_2,\de_1}_{3,-1}$ with $|xy|=1$. Then, Equation~\eqref{e:quadratic} reads 
\[
\de_1 + \de_2 = -(1 + 3xy),
\]
which forces $xy=-1$, $\de_1=\de_2=1$ and 
$b(S^{1,1}_{3,-1}) = \pm\{(1,-1)\}$.
This proves $(1)$. If $n=1$ then $m^{\de_2,\de_1}_{1,-1}=0$ and there is 
$(x,y)\in S^{\de_2,\de_1}_{1,-1}$ with $|xy|=0$. Then, Equation~\eqref{e:quadratic} 
implies $(x,y)\in\{(\pm 1,0), (0,\pm 1)\}$ and either $\de_1=-1$ or $\de_2=-1$, therefore $(2)$ holds.
If $\de_1=\de_2=-1$ Equation~\eqref{e:quadratic} implies $x^2+y^2=1$ and $(5)$ holds. If $\de_1\de_2=-1$,  
either $(x,y)=\pm (0,1)$ and $(\de_1,\de_2)=(1,-1)$ or $(x,y)=\pm (1,0)$ and $(\de_1,\de_2)=(-1,1)$. 
Hence, $(3)$ and $(4)$ hold. 
\end{proof}

\subsection{Hurwitz moves and handle slides}\label{ss:hurwitz}  

In the last step of the proof of Theorem~\ref{t:type10}, Section~\ref{ss:laststep}, we will repeatedly 
use the following Proposition~\ref{p:hurwitz}, which shows that if one applies a Hurwitz move to 
the factorization of a horizontal framed link $L$, the associated horizontal decomposition changes 
by a sequence of handle slides while staying horizontal. This result is essentially an adaptation 
and an extension of~\cite[Lemma~5.1]{TY12}. In the following we use notation from Section~\ref{s:intro}.

\begin{prop}\label{p:hurwitz}
Let $L = \bigcup_{i=1}^k L_i \subset \Si_g\x [0,1]$ be a 
horizontal link with $L_i\subset\mathfrak{S}_i:=\Si_g\x\{t_i\}$ for $i=1,\ldots, k$.
Let $F_L = (\tau_k^{\de_k},\ldots,\tau_1^{\de_1})$ be the factorization of $L$. 
Define the horizontal link 
$L' = \bigcup_{i=1}^k L'_i$ by setting  
$L'_j = L_j$ as framed knots for $j\neq i, i+1$ and 
\begin{align*}
L'_i&:= L_{i+1}\x\{t_i\}, & \fr(L'_i)&:= \fr_{\mathfrak{S}_i}(L'_i) - \de_{i+1}, \\
L'_{i+1}&:= \tau_{i+1}^{\de_{i+1}}(\pi(L_i))\x\{t_{i+1}\}, 
& \fr(L'_{i+1})&:= \fr_{\mathfrak{S}_{i+1}}(L'_{i+1}) - \de_i.
\end{align*}
Then, the factorization $F_{L'}$ is obtained from $F_L$ by the Hurwitz move  
\[
F_L = (\ldots,\tau_{i+1}^{\de_{i+1}},\tau_i^{\de_i},\ldots)\to 
F_{L'} = (\ldots,\tau_{i+1}^{\de_{i+1}}\tau_i^{\de_i}\tau_{i+1}^{-\de_{i+1}},
\tau_{i+1}^{\de_{i+1}},\ldots).
\]
If we set, instead, $L'_j = L_j$ for $j\neq i, i+1$ and 
\begin{align*}
L'_i&:= \tau_i^{\de_i}(\pi(L_{i+1}))\x\{t_i\}, & \fr(L'_i)&:= \fr_{\mathfrak{S}_i}(L'_i) - \de_{i+1},\\
L'_{i+1}&:= L_i\x\{t_{i+1}\}, & \fr(L'_{i+1})&:= \fr_{\mathfrak{S}_{i+1}}(L'_{i+1}) - \de_i, 
\end{align*}
the factorization $F_{L'}$ is obtained from $F_L$ by the Hurwitz move 
\[
F_L = (\ldots,\tau_{i+1}^{\de_{i+1}},\tau_i^{\de_i},\ldots)\to 
F_{L'} = (\ldots,\tau_i^{\de_i},\tau_i^{-\de_i}\tau_{i+1}^{\de_{i+1}}\tau_i^{\de_i},\ldots).
\]
In both cases, let $X$ and, respectively, $X'$ be cobordisms having horizontal decompositions having 
associated horizontal links $L$ and, respectively, $L'$. Then, the handlebody decomposition of 
$X_{L'}$ is obtained from the one induced on $X_L$ by a sequence of handle slides. 
In particular, $X$ and $X'$ are orientation-preserving diffeomorphic.  
\end{prop}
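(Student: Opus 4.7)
\emph{Step 1: the factorization identity.} Under the canonical identifications of each $\mathfrak{S}_j$ with $\Si_g$ via $\pi$, the new components satisfy $\pi(L'_i)=\pi(L_{i+1})$ and $\pi(L'_{i+1})=\tau_{i+1}^{\de_{i+1}}(\pi(L_i))$, so the Dehn twists associated to the two modified slots are $\tau_{i+1}$ and $\tau_{i+1}^{\de_{i+1}}\tau_i\tau_{i+1}^{-\de_{i+1}}$. The prescribed relative framings produce exponents $\de_{i+1}$ and $\de_i$, yielding exactly the displayed Hurwitz move. The second type of move is identical after exchanging the roles of $i$ and $i+1$.

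\emph{Step 2: realizing $L\rightsquigarrow L'$ by handle slides.} I would work inside the slab $S:=\Si_g\x[t_i-\ep, t_{i+1}+\ep]$, which contains only $L_i$ and $L_{i+1}$. First, ambient-isotope $L_{i+1}$ vertically downward so that it lies on a horizontal copy of $\Si_g$ just above the level of $L_i$. Next, at each transverse intersection point $p\in\pi(L_i)\cap\pi(L_{i+1})\subset\Si_g$, perform one handle slide of $L_i$ over a parallel pushoff of $L_{i+1}$, using as band a small rectangle in $\Si_g$ centered at $p$ oriented according to the sign of $\de_{i+1}$ and the local intersection sign at $p$. Finally, ambient-isotope so that the modified attaching curve sits at level $t_{i+1}$ and the undisturbed $L_{i+1}$ sits at level $t_i$. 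The key geometric claim is that, after the middle step, the projection of the new attaching circle obtained from $L_i$ is precisely $\tau_{i+1}^{\de_{i+1}}(\pi(L_i))$, because the chosen bands realize the standard local surgeries that define the action of a Dehn twist on a curve. The components $L_j$ with $j\neq i,i+1$ are untouched, and no $1$-handle slides are needed.

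\emph{Step 3: framings.} Each handle slide changes $\fr(L_i)$ by an amount of the form $\fr(L_{i+1})\pm 2\lk(L_i,L_{i+1}^{\text{pushoff}})$, with signs tracked locally at each intersection. At the same time, the Heegaard-surface framing $\fr_{\mathfrak{S}}$ of the projection transforms under $\tau_{i+1}^{\de_{i+1}}$ according to the standard formula for how a Dehn twist acts on framings of surface curves. Summing the local contributions over $p\in\pi(L_i)\cap\pi(L_{i+1})$ and combining with this formula, one sees that the slid attaching circle on its new horizontal surface has relative framing exactly $-\de_i$, matching the prescription for $L'_{i+1}$; the unchanged $L_{i+1}$ placed at level $t_i$ trivially has relative framing $-\de_{i+1}$, matching $L'_i$.

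\emph{Expected main obstacle.} The principal technical point is the coordinated bookkeeping in Step 3: for each transverse intersection $p$ of $\pi(L_i)$ and $\pi(L_{i+1})$, the choice of pushoff side, band orientation and local sign must be compatible both with the local model for the Dehn twist on surface curves (so that Step 2 delivers the claimed projection) and with the algebraic framing shift (so that Step 3 delivers the exponent $-\de_i$). Once such a consistent local model near a transverse intersection is fixed, the proposition follows by summing the local pictures over all intersection points; the second type of Hurwitz move then follows by the same argument with the vertical direction and the roles of $L_i, L_{i+1}$ reversed.
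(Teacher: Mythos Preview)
Your approach is essentially the same as the paper's: reduce to the two adjacent components, realize the Hurwitz move by sliding the lower handle over the upper one once for each crossing of the projections, and then check the resulting curve and framing. The paper also cites the identity $\tau_{\tau_a(b)}=\tau_a\tau_b\tau_a^{-1}$ for Step~1 and treats the second move and the case $\de_{i+1}=+1$ by symmetry/mirroring, exactly as you do.

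The one substantive difference is how the ``main obstacle'' you flag in Step~3 is dispatched. Rather than summing local contributions over the individual intersection points and matching them against a formula for how a Dehn twist changes surface framings, the paper first isotopes so that \emph{all} crossings of $\pi(L_i)$ and $\pi(L_{i+1})$ lie in a single disk in a standard parallel-strand pattern, draws $c$ parallel pushoffs of $L_{i+1}$, and performs the $c$ slides simultaneously in that picture. In this normalized local model both the identification of the slid curve with $\tau_{i+1}^{\de_{i+1}}(\pi(L_i))$ and the relative framing $-\de_i$ are read off directly from the figure, so the sign/framing bookkeeping you anticipate never has to be carried out crossing-by-crossing. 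Your local-surgery description is correct in principle, but to turn it into a complete proof you would still need to fix, once and for all, the pushoff side and band orientation at a model crossing and verify the two compatibilities you list; the paper's global picture is precisely a device for doing that in one stroke.
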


\begin{proof}[Proof of Proposition~\ref{p:hurwitz}]
Given simple closed curves $a,b\subset\Si_g$, it is a well-known and easily checked fact that 
$\tau_{\tau_a(b)} = \tau_a\tau_b\tau_a^{-1}$. From this and the definition of $L'$ it 
follows immediately that the factorization of $L'$ is obtained from that of $L$ as 
described. Therefore, we only need to check that the horizontal decomposition 
of $X_{L'}$ is obtained from that of $X_L$ by a sequence of handle slides. It clearly suffices to 
show this for a link with two components, so we may assume $L = L_1\cup L_2$, with $L_i\subset\mathfrak{S}_i:=\Si_g\x\{t_i\}$, $i=1,2$ and $t_1<t_2$. 
Suppose first that $\fr(L_2) = \fr_{\mathfrak{S}_2}(L_2) - 1$. After an isotopy,  
the diagram of the projection of $L$ to $\Si_g$ will appear inside a disk 
$D\subset \Si_g$ as in Figure~\ref{f:LiLj}, and will have no crossings 
outside of $D$. 
\begin{figure}[ht]
	\labellist
	\hair 2pt
	\pinlabel $L_1$ at 12 100
	\pinlabel $L_2$ at 100 38
	\pinlabel $-1$ at 100 65
	\endlabellist
	\centering
	\includegraphics[scale=0.7]{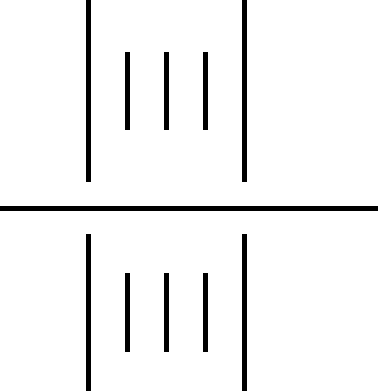}
	\caption{The local configuration of $\al$ and $\be$}
	\label{f:LiLj}
\end{figure}
Because of our assumption on the framings, indicated by the label $-1$ in the picture, the factorization of $L$ is given by $(\tau_{L_2}, \tau_{L_1}^{\pm 1})$. Assuming there are $c$ crossings in Figure~\ref{f:LiLj}, the left-hand side of Figure~\ref{f:slidev2} shows 
$c$ pushed-off copies of $L_2$ drawn in black. The link $L_1$ is shown in red, underneath $L_2$.  
\begin{figure}[ht]
	\labellist
	\hair 2pt
	\pinlabel $\lra$ at 270 157
	\pinlabel $L_1$ at 115 300
	\pinlabel $\sim \tau_{L_2}(\pi(L_1))\x\{t_2\}$ at 492 290
	\pinlabel $L_2$ at 210 140
	\pinlabel $-1$ at 20 140
	\pinlabel $L_2\x\{t_1\}$ at 535 140
	\pinlabel $-1$ at 370 140
	\endlabellist
	\centering
	\includegraphics[scale=0.6]{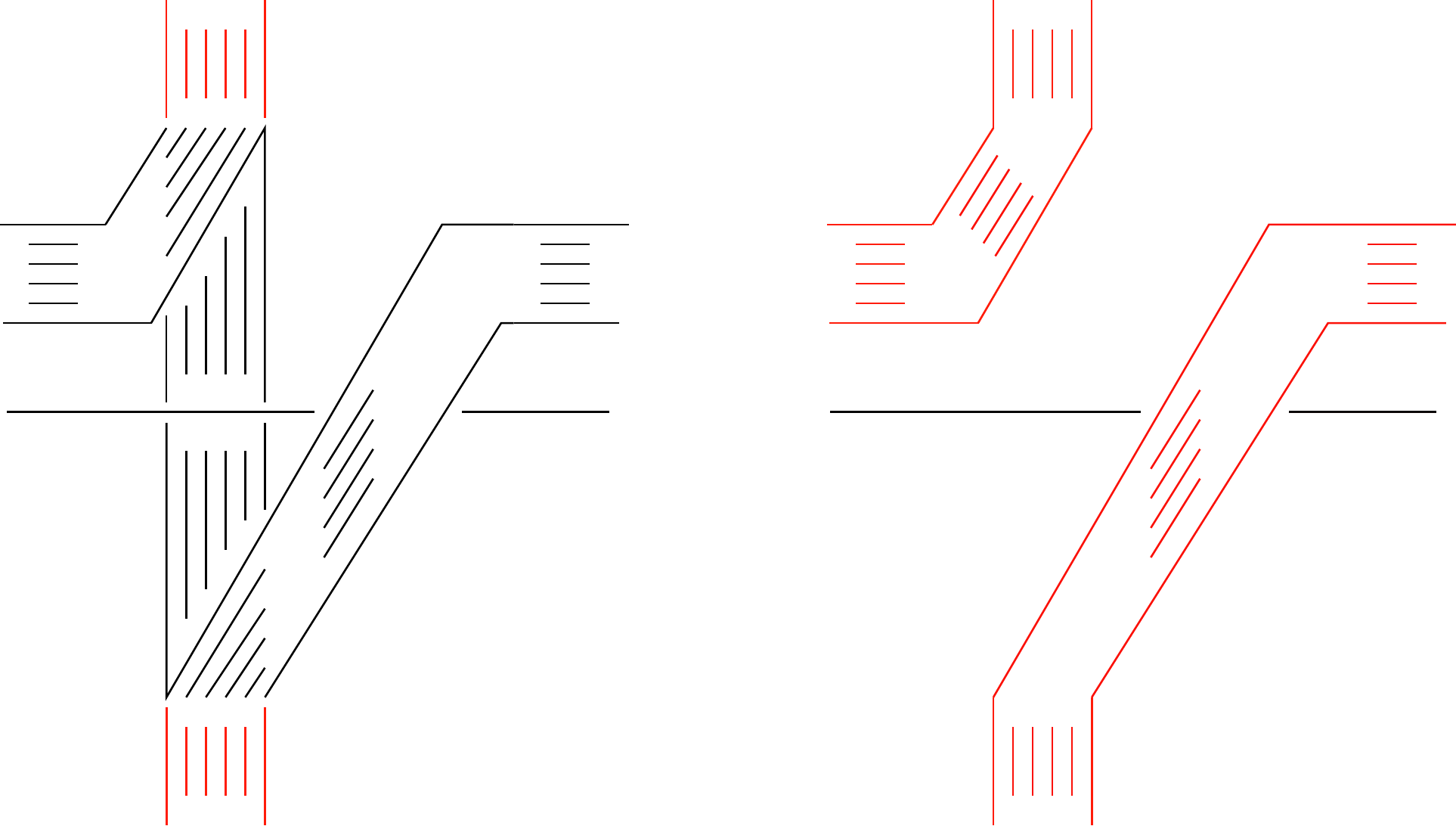}
	\caption{The result of $n$ handle slides}
	\label{f:slidev2}
\end{figure}
Sliding $c$ times the 2-handle attached along $L_1$ over the 2-handle attached along $L_2$
we obtain a curve of the form 
$L_1\setminus\{\cup_{i=1}^c I_1^{(i)}\}\cup L_2\setminus\{\cup_{i=1}^c I_2^{(i)}\}$, 
where $I_2^{(1)},\ldots,I_2^{(c)}\subset L_2$ are the $c$ vertical black segments visible 
on the left-hand side of Figure~\ref{f:slidev2} and $I_1^{(1)},\ldots,I_1^{(c)}\subset L_1$ the vertical 
red segments on $L_1$ directly underneath the $I_2^{(i)}$'s (not visible in the picture).
The right-hand side of Figure~\ref{f:slidev2} shows the resulting framed link, which is clearly 
framed isotopic to the horizontal framed link 
$L' = L_2\x\{t_1\}\cup \tau_{L_2}(\pi(L_1))\x\{t_2\}$, with $\fr(L_2\x\{t_1\}) = \fr_{\mathfrak{S}_1}(L_2\x\{t_1\}) - 1$ and 
\[
\fr(L_1\x\{t_2\}) = \fr_{\mathfrak{S}_2}(L_1\x\{t_2\}) - \fr_{\mathfrak{S}_1}(L_1) + \fr(L_1).
\]
The factorization of $L'$ is $(\tau_{L_2}\tau_{L_1}^{\pm 1}\tau_{L_2}^{-1},\tau_{L_2})$. 
This concludes the argument in the case $\fr(L_2) = \fr_{\mathfrak{S}_2}(L_2) - 1$. The case 
$\fr(L_2) = \fr_{\mathfrak{S}_2}(L_2) + 1$ can be treated similarly or reduced to the previous case by mirroring, 
therefore the proof is complete.   
\end{proof}  

As an illustration of Proposition~\ref{p:hurwitz} we show how to use Hurwitz moves to prove  
the existence of an orientation-preserving diffeomorphism between the 4-manifolds $X$ and $X'$ 
described by the pictures in Figure~\ref{f:diffeo}. The same diffeomorphism can of 
course be easily established also by Kirby calculus. 
\begin{figure}[ht]
	\labellist
	\hair 2pt
\pinlabel $1$ at 60 15
\pinlabel $-1$ at 120 15
\pinlabel $-1$ at 180 15
\pinlabel $-1$ at 190 115
\pinlabel $-1$ at 470 -5
\pinlabel $-1$ at 450 90
\pinlabel $-1$ at 490 130
\pinlabel $1$ at 515 145
	\endlabellist
	\centering
	\includegraphics[scale=0.6]{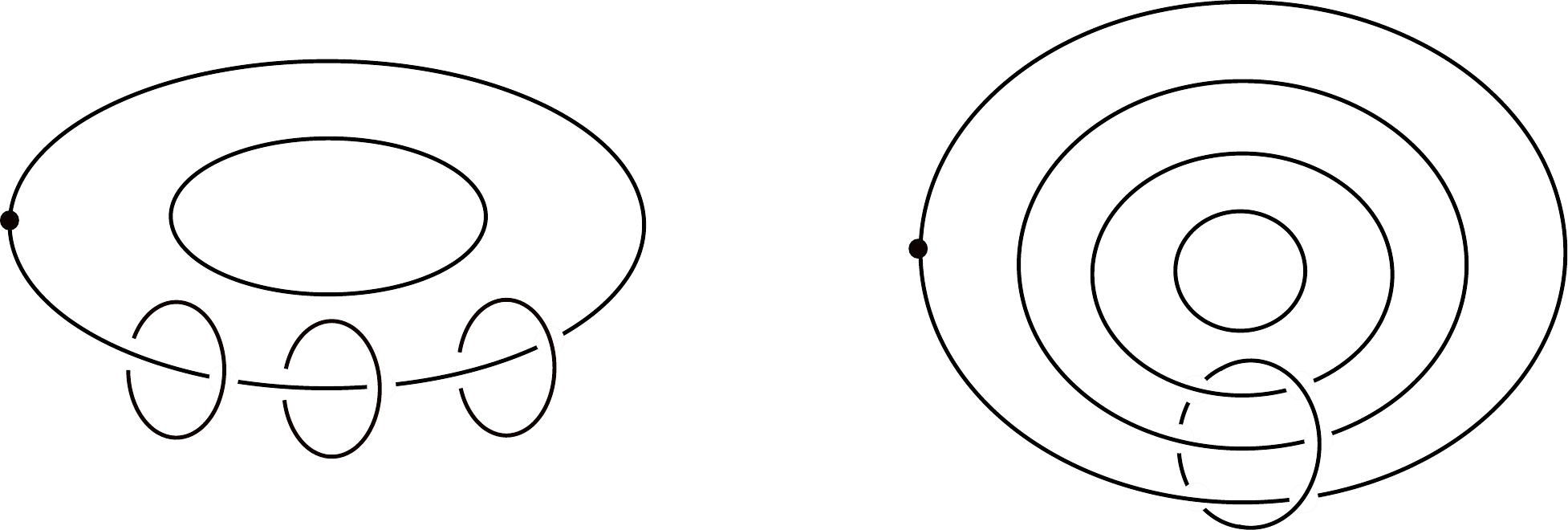}
	\vspace*{0.2cm}
	\caption{The diffeomorphic 4-manifolds $X$ and $X'$}
	\label{f:diffeo}
\end{figure}
Both pictures of Figure~\ref{f:diffeo} can be viewed as describing 4-component horizontal framed links inside $S^2\x S^1$, 
giving $(1,1,3,0)$-decompositions of $X\setminus\left(B^4\sqcup B^4\right)$ and 
$X'\setminus\left(B^4\sqcup B^4\right)$ viewed as cobordisms 
$S^3\to S^3$. Let $\mu$ and $\la$, respectively, be the meridian and the Seifert longitude of the dotted unknot $U$. 
If we view $\mu$ and $\la$ as simple closed curves on the boundary of a standard neighborhood of $U$, 
then we see that the link $L$ on the left-hand side is horizontal with 
respect to the Heegaard decomposition described in Example~\ref{exa:exa1}, 
with factorization $F_L = (\tau_\la, \tau_\mu, \tau^{-1}_\mu, \tau_\mu)$. On the other hand, the link $L'$ on the right-hand side has 
factorization $F_{L'} = (\tau_\la,\tau_\mu,\tau_\la,\tau^{-1}_\la)$. We shall use the known fact that, given a 
diffeomorphism $f\co\Si_g\to\Si_g$ and a simple closed curve $a\subset\Si_g$, we have $\tau_{f(a)} = f\tau_a f^{-1}$. 
Denoting with $\stackrel{H}{\sim}$ Hurwitz equivalence, we have 
\[
F_L \stackrel{H}{\sim} 
(\tau_\mu,\tau_{\tau^{-1}_{\mu}(\la)},\tau^{-1}_\mu,\tau_\mu)\stackrel{H}{\sim}
(\tau_\mu,\tau^{-1}_\mu,\tau_\la,\tau_\mu)\stackrel{H}{\sim}
(\tau_\mu, \tau_\la, \tau^{-1}_{\tau^{-1}_\la(\mu)},\tau_\mu)\stackrel{H}{\sim}
(\tau_\mu,\tau_\la,\tau_\mu,\tau^{-1}_{\tau^{-1}_\mu \tau^{-1}_\la (\mu)})
\]
while 
\[
F_{L'} \stackrel{H}{\sim} 
(\tau_\mu,\tau_{\tau^{-1}_\mu(\la)},\tau_\la,\tau^{-1}_\la)\stackrel{H}{\sim}
(\tau_\mu,\tau_\la,\tau_{\tau^{-1}_\la \tau^{-1}_\mu (\la)},\tau^{-1}_\la) =
(\tau_\mu,\tau_\la,\tau_\mu,\tau^{-1}_{\tau^{-1}_\mu \tau^{-1}_\la (\mu)}),
\]
because $\tau^{-1}_\la \tau^{-1}_\mu (\la)$ is isotopic to $\mu$ and 
$\tau^{-1}_\mu \tau^{-1}_\la (\mu)$ to $-\la$. 
This shows $F_L\stackrel{H}{\sim} F_{L'}$, and therefore $X$ and $X'$ are diffeomorphic 
by Proposition~\ref{p:hurwitz}. 

\subsection{Last step}\label{ss:laststep}

In this section we conclude the proof of Theorem~\ref{t:type10}. We assume familiarity with the notation 
introduced in the previous sections. 
Let $X\co S^3\to S^3$ be the cobordism of Section~\ref{ss:firststep}, and $L$ the horizontal two-component link associated to a 
$(1,1,2,0)$-decomposition of $X$. 
We assume that $L$ has factorization $F_L=(\tau_2^{\de_2},\tau_1^{\de_1})$ 
for some $\de_1,\de_2\in\{\pm 1\}$. The oriented curves $\la,\ga_1,\ga_2\subset T$ determine
$(x,y)\in S_{n,\ep}^{\de_2,\de_1}$ for some $\ep\in\{\pm 1\}$. 

By Proposition~\ref{p:hurwitz} we know that applying Hurwitz moves to $F_L$ we obtain 
factorizations of links associated to $(1,1,2,0)$-decompositions of $X$. Therefore, our plan will be to show that 
via Hurwitz moves we can change $F_L$, and consequently $(x,y)$, 
until we get a pair $(x',y')$ with 
$|x'y'|$ minimal. If we succeed, to determine the diffeomorphism type of $X$ 
will suffice to do it when the pair $(x,y)$ is minimal. 
We start with a couple of observations showing that we may assume $n\in\{1,3\}$ and $\ep = -1$.

First observe that, since reversing the orientation of a component of $L$ changes the sign of $n$ without altering $X_L$, from 
now on we may (and will) assume without loss of generality $n>0$. 
In view of Lemma~\ref{l:mutations}, the number $n$ will be either $3$ or 
$1$. Next, observe that if we represent $S^1 \times S^2$ as a dotted unknot $U \subset S^3$, we may assume that $T$ is the 
boundary of a regular neighborhood of $U$. We may also assume that the curves $\mu$ and $\la$ of the symplectic basis used to 
define $x$ and $y$ are, respectively, a meridian and a Seifert longitude of $U$. Taking the union with $L$ gives a Kirby 
diagram for $\hat X := B^4\cup_{\del_- X} X$. Note that taking the mirror image of such a diagram amounts to replacing the 
curves $\ga_i=p_i \mu + q_i \la$ with $p_i \mu-q_i \lambda$ and changing the signs of their relative framings. The result is 
a diagram of a $(1,1,2,0)$-decomposition of $-X$ with associated pair $(-x,-y)$. 
If we also reverse the orientation of $\ga_2$ we 
obtain a bijection between $S_{n,\ep}^{\de_2,\de_1}$ and $S_{n,-\ep}^{-\de_2,-\de_1}$ given by 
$(x,y) \leftrightarrow (x,-y)$. 
This means that, up to reversing the orientation of $X$ we may assume $\ep = -1$.

Let $U,V\co S_{n,\ep}^{\de_2,\de_1}\to S_{n,\ep}^{\de_2,\de_1}$ be given 
by $U(x,y)=(\hat x,y)$ and $V(x,y)=(x,\hat y)$, where 
\[
\mat{\hat x\\y} = \mat{-1&-n\de_2\\0&1}\mat{x\\y}\quad\text{and}\quad 
\mat{x\\\hat y} = \mat{1&0\\-n\de_1&-1}\mat{x\\y}.
\]
We warn the reader that, to avoid heavy notation, we are writing $U$ 
instead of $U^{\de_2,\de_1}_{n,\ep}$, and similarly for $V$. 
Define 
$S\co S_{n,\ep}^{\de_2,\de_1}\to S_{n,\ep}^{\de_1,\de_2}$
by $S(x,y) = (y,x)$. Then, $U$,$V$ and $S$ act on $S^{1,1}_{3,-1}$ and 
$S^{-1,1}_{1,-1}\cup S^{1,-1}_{1,-1}$ 
and satisfy the relations $US=SV$ and $VS=SU$. 

We now check what happens to the quantities $x$, $y$ and $n$ when we apply a Hurwitz move to $F_L$. The two possible Hurwitz moves are: 
\[
(\tau_2^{\de_2},\tau_1^{\de_1})\to 
(\tau_1^{\de_1},\tau_1^{-\de_1}\tau_2^{\de_2}\tau_1^{\de_1}
= \tau_{-\ga_2-n\de_1\ga_1}^{\de_2})
\quad\text{and}\quad
(\tau_2^{\de_2},\tau_1^{\de_1})\to 
(\tau_2^{\de_2}\tau_1^{\de_1}\tau_2^{-\de_2}= \tau_{-\ga_1-n\de_2\ga_2}^{\de_1},\tau_2^{\de_2}).
\]
So we get, respectively, 
\begin{equation}\label{e:trans}
(x,y,n)\to (-y-n\de_1 x,x,n)=(\hat y,x,n)\quad\text{and}\quad 
(x,y,n)\to (y,-x-n\de_2 y,n) = (y,\hat x,n).
\end{equation}
Thus, a Hurwitz move transforms 
$F_L$ into $F_{L'}$, where the triple $(x',y',n')$ associated to $L'$ satisfies 
$n'=n$ and, using notation from Section~\ref{ss:secondstep}, either $(x',y') = SU(x,y)$ or $(x',y')=SV(x,y)$. 
In particular, if $(x,y) \in S_{n,-1}^{\de_2,\de_1}$ then $(x',y')\in S_{n,-1}^{\de_1,\de_2}$. 

\noindent
{\bf Case $n=3$:}
since $\ep = -1$, in view of Lemma~\ref{l:bottomsets}(1) we may assume that the 
pair $(x,y)$ associated to $L$ is in $S_{3,-1}^{1,1}$, which is preserved by both $SU$ and $SV$. 
By Lemma~\ref{l:bottomsets}, a sequence of 
Hurwitz moves transforms $F_L$ into   $F_{L'}$, where $L'$ is a 
horizontal framed link whose associated pair is $(1,-1)$ and by 
Proposition~\ref{p:hurwitz} we have $X_L\cong X_{L'}$. 
Thus, it suffices to show that if $n=3$ and the pair is $(1,-1)\in S_{3,-1}^{1,1}$ 
then $X$ is diffeomorphic to $\CP^2\setminus (B^4\cup B^4)$. From $x=\ga_1 \cdot \la= 1$ we deduce that 
$\ga_1=\mu+c \la$ for some $c\in\mathbb{Z}$. We can simplify further the handlebody decomposition as follows. 
Any self-diffeomorphism $T\to T$ of the form $\tau_\la^m$, $m\in\Z$, extends to a 
diffeomorphism $\bar\tau\co H_1\to H_1$ and therefore to 
$\bar\tau\x\id\co H_1\x [0,1]\to H_1\x [0,1]$. We can extend $\bar\tau\x\id$ over the two $2$-handles $h_1$ 
and $h_2$ attached along $\ga_1$ and $\ga_2$ to an orientation-preserving diffeomorphism 
\[
\tilde\tau\co (H_1\x [0,1])\cup h_1\cup h_2\stackrel{\cong}{\to} 
(H_1\x [0,1])\cup \tilde\tau(h_1)\cup \tilde\tau(h_2),
\]
where the $2$-handles $\tilde\tau(h_i)$ are attached along $L' =\tilde\tau(L)$. The factorization of $F_{L'}$ 
is obtained by conjugating the elements of $F_L$ by $\tau_\la^m$ and the triple associated to $L'$ is still 
$(x,y,n)$. This shows that we may assume $\ga_1=\mu$ without loss of generality. Now $y=\ga_2 \cdot \la=-1$ 
and $n=\ga_2 \cdot \ga_1=3$ force $\ga_2=-\mu-3\la$. The associated Kirby diagram is isotopic to 
the one given in Figure~\ref{f:n=3}. 
\begin{figure}[ht]
	\labellist
	\hair 2pt
	\pinlabel $2$ at 52 100
	\pinlabel $-1$ at 120 95
	\endlabellist
	\centering
	\includegraphics[scale=0.5]{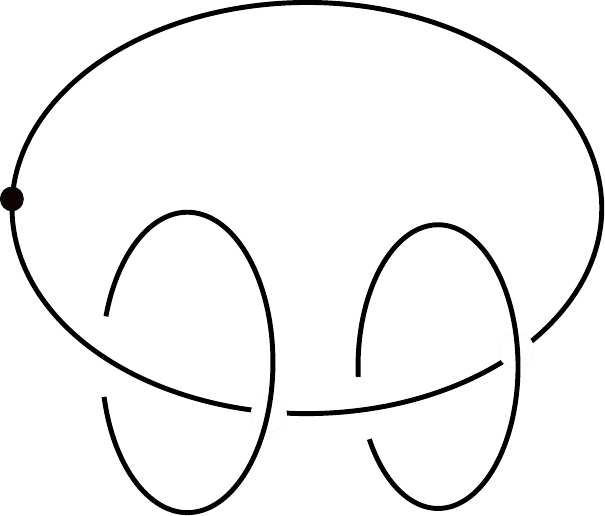}
	\caption{Kirby diagram in the case $n=3$}
	\label{f:n=3}
\end{figure}
It can be transformed into the standard diagram of $\CP^2$ by sliding the $2$-framed handle over 
the $-1$-framed handle and then 
cancelling the $1$-handle with the $-1$-framed $2$-handle.

\noindent
{\bf Case $n=1$:}
since $\ep = -1$, in view of Lemma~\ref{l:bottomsets} we may assume that the pair $(x,y)$ 
associated to $L$ is either in 
$S_{1,-1}^{1,-1}\cup S_{1,-1}^{-1,1}$ or $S_{1,-1}^{-1,-1}$. In the latter case a sequence of Hurwitz moves 
transforms $F_L$ into   $F_{L'}$, where $L'$ is a horizontal framed 
link whose associated pair $(x,y)\in \pm\{(1,0),(0,1)\}$ and by 
Proposition~\ref{p:hurwitz} we have $X_L\cong X_{L'}$. 
If $(x,y)$ is, up to sign, equal to $(1,0)$ then, since $\ga_2$ is non-separating it must 
be parallel to $\lambda$. Moreover, 
$x=\ga_1\cdot\la = 1$ and $n=1=\ga_2\cdot\ga_1$ imply $(\ga_1,\ga_2) = \pm (\mu, \la)$. 
Since $\de_1=\de_2=-1$, a Kirby calculus picture is given by a cancelling $(1,2)$-pair with $+1$-framed 
meridian $2$-handle together with an unlinked $+1$-framed unknot. 
Therefore in this case $X\cong\CP^2\setminus(B^4\cup B^4)$. If $(x,y)=(0,1)$ we deduce 
$(\ga_1,\ga_2) = \pm (\la,\mu)$ and reach the 
same conclusion as before. Now suppose 
$(x,y)\in S_{1,-1}^{1,-1}\cup S_{1,-1}^{-1,1}$. 
By Lemma~\ref{l:bottomsets}, a sequence of 
Hurwitz moves transforms $F_L$ into $F_{L'}$, where $L'$ is a horizontal framed link with associated 
pair equal to either $(\pm 1,0)\in S_{1,-1}^{-1,1}$ 
or $(0,\pm 1)\in S_{1,-1}^{1,-1}$.
In the first case we deduce as before $(\ga_1,\ga_2) = \pm (\mu,-\la)$, in the second case 
$(\ga_1,\ga_2) = \pm (\la,\mu)$. In both cases the associated Kirby diagram is easily shown to represent 
$\CP^2$. This concludes the proof of Theorem~\ref{t:type10}.

\begin{rmk}\label{r:epsilon}
The above proof shows that when the pair $(x,y)$ associated to the 
$(1,1,2,0)$-decomposition of $X$ and the oriented curves 
$\la,\ga_1$ and $\ga_2$ belongs to $S^{\de_2,\de_1}_{n,\ep}$ 
with $n\in\{1,3\}$ and $\ep=-1$, then $X$ is {\em orientation-preserving} diffeomorphic to $\CP^2\setminus(B^4\cup B^4)$. 
Since the quantity $\ep = \de_1\de_2 m_L(\la)\cdot\la$ is invariant under orientation changes of 
$\la,\ga_1$ and $\ga_2$ and Hurwitz moves, while it changes sign if the orientation of $X$ is 
reversed, we conclude that $\ep = -\si(X)$. This fact can be also established directly, applying  
Wall's non-additivity formula~\cite{Wa69}. 
\end{rmk}

\section{Proof of Theorem~\ref{t:disj-emb}}\label{s:disj-emb}

In this section we determine the smooth embeddings $B_{p,q}\subset\CP^2$ implicit in our proof of 
Theorem~\ref{t:type10}. Recall that, given a $(g,u,\ell,h)$-decomposition of an oriented cobordism 
$X\co\del_- X\to\del_+ X$, we denote by $X_L$ the cobordism determined by attaching the $2$-handles 
to the horizontal link $L\subset \del^u_- X$. More generally, if $L'\subset L$ is a sublink of $L$ 
we denote by $X_{L'}$ the cobordism determined by the corresponding $2$-handles. 
The following lemma is key to our arguments. 

\begin{lemma}\label{l:embed2balls}
Let $\ga_1\cup\ga_2\subset S^1\x S^2$ be a two-component link, 
horizontal with respect to the standard genus-$1$ Heegaard decomposition and 
associated to a $(1,1,2,0)$-decomposition of $X=\CP^2\setminus(B^4\cup B^4)$. Then, 
setting $\hat X_{\ga_i} := S^1\x D^3\cup_{\del_- X_{\ga_i}} X_{\ga_i}$, for $i=1,2$, 
the disjoint union $\hat X_{\ga_1}\sqcup\hat X_{\ga_i}$ admits a smooth orientation-preserving 
embedding in $\CP^2$.
\end{lemma}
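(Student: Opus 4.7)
My plan is to exhibit the two embeddings by exploiting the handle decomposition of $\CP^2$ coming from the given $(1,1,2,0)$-decomposition. Writing $X = T\cup X_L$ where $T$ is the 1-handle cobordism and $X_L = h_1\cup h_2$ is the 2-handle cobordism, and noting that $B^4\cup T \cong S^1\x D^3$, we have
\[
\CP^2 \;=\; (S^1\x D^3)\cup h_1\cup h_2\cup B^4,
\]
and $\hat X_{\ga_i} = (S^1\x D^3)\cup_{\ga_i} h_i$. The first embedding $\hat X_{\ga_1}\hookrightarrow\CP^2$ is then tautological: it is the inclusion of the sub-handlebody $(S^1\x D^3)\cup h_1$ consisting of the 0-handle, the 1-handle, and the 2-handle $h_1$.

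For the second embedding, I would work inside the complement $V_1 := \CP^2\setminus\int(\hat X_{\ga_1})$. As a cobordism from $\del\hat X_{\ga_1}$ to the empty set, $V_1$ is built by attaching the remaining 2-handle $h_2$ and the 4-handle $B^4$; turning this upside-down identifies $V_1$ as a $D^2$-bundle over $S^2$ obtained from $B^4$ (dual to the 4-handle) by a single 2-handle attachment (dual to $h_2$), and a Mayer--Vietoris computation applied to $\CP^2 = \hat X_{\ga_1}\cup V_1$ determines its Euler number as $p_1^2$, where $\ga_1$ represents $p_1\mu+q_1\la\in H_1(T;\Z)$.

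The crux is constructing an embedding $\hat X_{\ga_2}\hookrightarrow V_1$. My strategy would be a Kirby-calculus construction: introduce a canceling $(1,2)$-pair $(h^{(1)}_*, h^*)$ to the decomposition of $\CP^2$, and then slide $\ga_2$ over $h^*$ enough times to decouple $\ga_2$ from the original 1-handle and re-attach it along a curve wrapping $p_2$ times around $h^{(1)}_*$. The resulting decomposition displays $\CP^2$ as a 4-manifold with two 1-handles and three 2-handles, in which both $\hat X_{\ga_1}$ and a sub-handlebody $B^4\cup h^{(1)}_*\cup h_2'$ (with $h_2'$ the slid copy of $h_2$) appear as sub-handlebodies sharing only the 0-handle; by choosing two disjoint 4-balls inside this $B^4$ as 0-handles for the two sub-handlebodies and using small ambient isotopies to separate the 1-handles, we obtain disjoint embedded copies.

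The main obstacle will be the Kirby-calculus verification that the new sub-handlebody $B^4\cup h^{(1)}_*\cup h_2'$ is diffeomorphic to $\hat X_{\ga_2}$ as originally defined, since the handle slides over $h^*$ alter both the homotopy class of the attaching circle in $\pi_1(S^1\x D^3)$ and its framing. Tracking these effects — using that $h^*$ cancels $h^{(1)}_*$ geometrically, so that the new attaching curve can be arranged to represent $p_2\mu_*$ in $\pi_1$ with framing equal to the original $\fr(\ga_2)$ — should identify the resulting 4-manifold with $\hat X_{\ga_2}$, completing the proof.
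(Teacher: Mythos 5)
Your first embedding (the tautological inclusion of the sub-handlebody $(S^1\x D^3)\cup h_1$) matches the easy half of the argument, but the second embedding --- which is the entire content of the lemma --- is left as an unexecuted Kirby-calculus program, and the program as described has a genuine gap. You propose to slide $\ga_2$ over a cancelling $2$-handle $h^*$ until it ``decouples'' from the original $1$-handle and then to identify the resulting sub-handlebody with $\hat X_{\ga_2}$ by ``tracking'' the homotopy class of the new attaching circle in $\pi_1(S^1\x D^3)$ and its framing. That data is far from sufficient: the diffeomorphism type of $(S^1\x D^3)\cup_{\ga} h$ depends on the framed \emph{isotopy} class of $\ga$ in $S^1\x S^2$, and what makes $\hat X_{\ga_2}$ a rational ball $B_{p,q}$ (via Proposition~3.2) is precisely that $\ga_2$ is a specific horizontal curve of slope $(p_2,q_2)$ on the Heegaard torus with a specific relative framing. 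Nothing in your sketch shows that the slid curve $\ga_2'$ is again horizontal of the correct slope, nor that it can be made disjoint from and unlinked with $h_1$, $h^*$ and the original $1$-handle so that the two sub-handlebodies really only share the $0$-handle. (The side claim that $V_1=\CP^2\setminus\int\hat X_{\ga_1}$ is a $D^2$-bundle over $S^2$ is also unjustified --- it requires the dual attaching circle of $h_2$ to be an unknot --- though it is not used.)

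The missing idea, and the one the paper uses, is much simpler and requires no handle slides at all: $S^1\x D^3\cong H_1\x[0,1]$, where $H_1$ is a solid torus, and under this identification the level surface $\Si_1\x\{t\}\subset S^1\x S^2=\del(S^1\x D^3)$ is exactly $\del(H_1\x\{t\})$. Since $\ga_1$ and $\ga_2$ lie on distinct levels $\Si_1\x\{t_1\}$ and $\Si_1\x\{t_2\}$ with $t_1<t_2$, the thickened slices $H_1\x[t_i-\ep,t_i+\ep]$ are two \emph{disjoint} copies of $S^1\x D^3$ inside the single $1$-handle cobordism, each containing the attaching region of the corresponding $2$-handle in its boundary. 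Hence $H_1\x[t_i-\ep,t_i+\ep]\cup h_i\cong\hat X_{\ga_i}$ sit disjointly inside $(S^1\x D^3)\cup h_1\cup h_2\subset\CP^2$. I would encourage you to replace the Kirby-calculus plan with this observation; as written, your proof does not close.
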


\begin{proof}  
Each $2$-handle $h_i$, $i=1,2$, is attached along $\ga_i\x\{t_i\}\subset \Si_1\x\{t_i\}\subset S^1\x S^2$, 
with $0<t_1<t_2<1$. We may clearly assume that the attaching loci of the handles are disjoint. 
Hence, in these cases $\CP^2$ contains $S^1\x D^3 \cup X_{\ga_1\cup \ga_2}$. Moreover, 
$S^1\x D^3\approx H_1\x [0,1]$ and $\del(H_1\x\{t_i\}) = \Si_1\x\{t_i\}$, so  
$\CP^2$ contains $H_1\x [t_i-\ep,t_i+\ep]\cup X_{\ga_i}$ for $i=1,2$ and some 
small $\ep>0$. Therefore, the disjoint union $(S^1\x D^3\cup X_{\ga_1})\cup (S^1\x D^3\cup X_{\ga_2})$ 
admits a smooth orientation-preserving embedding in $\CP^2$. 
\end{proof} 

In view of Lemma~\ref{l:embed2balls}, to prove Theorem~\ref{t:disj-emb} we need to identify 
the $4$-manifolds $\hat X_{\ga_i}$, $i=1,2$. Note that  
$X_{\ga_i}\co S^3\to \del_+ X_{\ga_i}$ is a cobordism having $\chi(X_{\ga_i})=0$ and a 
$(1,1,1,0)$-decomposition. Therefore, if 
$b_1(\del_+ X_{\ga_i}) = 0$ and the factorization is $(\tau_2^{\de_2},\tau_1^{\de_1})$, 
it follows from Proposition~\ref{p:cob-type10} that $\hat X_{\ga_i}$ is diffeomorphic to either $B^4$ or 
$\de_i B_{p,q}$ for some $p>q>0$. The following result clearly implies Theorem~\ref{t:disj-emb}. 

\begin{thm}\label{t:precise-disj-emb}
Suppose that a horizontal decomposition of $\CP^2\setminus(B^4\cup B^4)$ has type 
$(1,1,2,0)$, factorization $(\tau_2^{\de_2},\tau_1^{\de_1})$ and that 
$b_1(\del\hat X_{\ga_1})=b_1(\del\hat X_{\ga_2})=0$. 
Then, the set $\{\hat X_{\ga_1},\hat X_{\ga_2}\}$ is one of the following: 
\begin{enumerate} 
\item[(1)]
$\{B_{m+1},B_m\}$ for some $m \geq 0$, where $B_m = B_{F_{2m-1},F_{2m-5}}$ if $\de_1=\de_2=1$;
\item[(2)] 
$\{B'_{m+1},B'_m\}$ for some $m \geq 0$, where $B'_m = (-1)^m B_{F_{m+1},F_{m}}$ if $\de_1+\de_2=0$;
\item[(3)]
$\{B^4\}$ 
if $\de_1=\de_2=-1$.
\end{enumerate} 
Moreover, in Case $(1)$ we have $|\ga_1\cdot\ga_2|=3$, 
in Cases $(2)$ and $(3)$ we have $|\ga_1\cdot\ga_2|=1$. 
\end{thm}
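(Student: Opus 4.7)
The plan is to convert the identification of $\hat X_{\ga_i}$ into an algebraic question about the sets $S^{\de_2,\de_1}_{n,-1}$ classified in Section~\ref{ss:secondstep}. Given a $(1,1,2,0)$-decomposition of $X=\CP^2\setminus(B^4\cup B^4)$, the analysis of Section~\ref{ss:firststep} associates to it a triple $(x,y,n)$ with $x=p_1,\ y=p_2,\ n=p_2 q_1-p_1 q_2$, where $\ga_i = p_i\mu + q_i\la$, and the proof of Theorem~\ref{t:type10} shows that up to reversing the orientation of $X$ one may assume $n\in\{1,3\}$ and $\ep = -1$. Since $|n|=|\ga_1\cdot\ga_2|$, the assertion on $|\ga_1\cdot\ga_2|$ in the statement will follow directly from this dichotomy once the three cases are matched.

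Because $b_1(\del\hat X_{\ga_i})=0$, Proposition~\ref{p:cob-type10} applies to the subcobordism $X_{\ga_i}\co S^3\to\del_+ X_{\ga_i}$. After flipping $\ga_i$ so that $p_i\geq 0$ and reducing $q_i$ modulo $p_i$ to $q_i^*\in\{0,\ldots,p_i-1\}$, it gives $\hat X_{\ga_i}\cong B_{p_i,\,p_i-q_i^*}$ if $\de_i=1$ and $\hat X_{\ga_i}\cong -B_{p_i,q_i^*}$ if $\de_i=-1$, with the convention $B_{1,q}=B^4$. Thus identifying $\hat X_{\ga_i}$ reduces to computing $(p_i,q_i^*)$ from the triple $(x,y,n)$.

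In Case (1), Lemma~\ref{l:bottomsets}(1) gives $\de_1=\de_2=1$ and a unique orbit. Setting $a_m := F_{2m+1}$ for $m\in\Z$ (so $a_{m+1}=3 a_m - a_{m-1}$), I would show by induction on mutations from the bottom pair $(1,-1)$ that every $(x,y)\in S^{1,1}_{3,-1}$ has the form $\pm(a_m,-a_{m-1})$. For such a pair the relation $-a_{m-1}q_1-a_m q_2 = 3$ together with $\gcd(a_m,a_{m-1})=1$ determines $q_1\bmod a_m$ and $q_2\bmod a_{m-1}$, and Fibonacci identities identify $B_{a_m,a_m-q_1^*}$ with $B_{F_{2m+1},F_{2m-3}}=B_{m+1}$ and $B_{a_{m-1},a_{m-1}-q_2^*}$ with $B_{F_{2m-1},F_{2m-5}}=B_m$. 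In Case (2), Lemma~\ref{l:bottomsets}(3)--(4) gives $\de_1+\de_2=0$ and an orbit whose absolute values $(|x|,|y|)$ trace out consecutive Fibonacci pairs $(F_{m+1},F_m)$; the signs $\de_1,\de_2$ alternate as one mutates, and this alternation together with the isomorphism $B_{p,p-q}\cong B_{p,q}$ produces precisely the $(-1)^m$ factor in $B'_m=(-1)^m B_{F_{m+1},F_m}$. In Case (3), Lemma~\ref{l:bottomsets}(5) bounds the orbit to $\pm\{(1,0),(0,1),(1,1)\}$, so $\min(p_1,p_2)\leq 1$ and both $\hat X_{\ga_i}\cong B^4$.

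The main obstacle I anticipate is the Fibonacci bookkeeping in Case (1): one needs identities of the form $F_{2m+1}F_{2m-3}\equiv (F_{2m-1})^2\pmod{\text{something}}$ and a Bezout-type computation $a_{m-1}\cdot F_{2m-3}\equiv -3\pmod{a_m}$ to confirm that $q_1^*$ reduces to the value producing $B_{F_{2m+1},F_{2m-3}}$ rather than some orientation-reversed or index-shifted variant. A useful alternative to computing $q_i^*$ directly is to exploit Proposition~\ref{p:hurwitz}: since mutations realize Hurwitz moves on the factorization, the homology class of $\ga_i$ changes by $\ga_i\mapsto \ga_i\pm n\ga_j$, an inductive step that cleanly matches the Fibonacci recursion and transports the desired identification from the base of the orbit to every other element.
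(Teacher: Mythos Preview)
Your approach is essentially the paper's: reduce to Proposition~\ref{p:cob-type10}, classify the pairs $(x,y)$ in $S^{\de_2,\de_1}_{n,-1}$ in Fibonacci terms (the paper packages this as Lemma~\ref{l:n=13}), then solve for $q_i\bmod p_i$. The ``Fibonacci bookkeeping'' you anticipate is exactly what the paper handles with Vajda's identity $F_r F_{m+j}-F_m F_{r+j}=(-1)^{r+1}F_{m-r}F_j$, which gives the Bezout relations directly and spares you the inductive transport via Hurwitz moves.

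Two points to tighten. First, you cannot assume $\ep=-1$ ``up to reversing the orientation of $X$'': here $X=\CP^2\setminus(B^4\cup B^4)$ is already oriented, and reversing it would send you to $\overline{\CP^2}$. You need Remark~\ref{r:epsilon}, which gives $\ep=-\sigma(X)=-1$ intrinsically. Second, your description of Case~(2) is slightly off: mutations $U,V$ do \emph{not} change $(\de_1,\de_2)$; within a fixed $S^{-1,1}_{1,-1}$ there are two sub-orbits $\T_1=\{(F_{2k+1},F_{2k})\}$ and $\T_2=\{(F_{2k+1},-F_{2k+2})\}$ (related by a single $V$), and it is the parity of the Fibonacci index together with the fixed sign $\de_i$ that produces the $(-1)^m$ in $B'_m$, not an alternation of $\de_i$. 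In Case~(3) you should also note that $b_1=0$ forces $p_1p_2\neq 0$, so only $(|p_1|,|p_2|)=(1,1)$ survives.
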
 

For the proof of Theorem~\ref{t:precise-disj-emb} we need the following Lemma~\ref{l:n=13}. 
Note that the involutions $S$ given by $S(x,y)=(y,x)$ and minus the identity $-I$ generate 
$G$-actions on $S^{1,1}_{3,-1}$ and $S^{-1,1}_{1,-1}\cup S^{1,-1}_{1,-1}$, 
where $G=\langle S, -I\rangle\cong\Z/2Z\x\Z/2\Z$. We denote by $H=\{I,-I\}\cong\Z/2\Z$ the 
subgroup of $G$ generated by $-I$.

\begin{lemma}\label{l:n=13}
Let \{$F_m\}_{m\in\Z}\subset\Z$ be the Fibonacci sequence, 
given by $F_{-1} = 1$, $F_0 = 0$ and $F_{m+1} = F_{m} + F_{m-1}$. 
Then, we have 
\[
S_{3,-1}^{1,1} = G\cdot\S, \quad 
S_{1,-1}^{-1,1} = H\cdot(\T_1\cup\T_2),\quad
S_{1,-1}^{1,-1} = S\cdot S_{1,-1}^{-1,1}
\]
\[
\text{and}\quad S^{-1,-1}_{1,-1} = \pm\{(1,0),(0,1),(1,1)\}, 
\]
where 
\[
\S = \{(-F_{2k-1},F_{2k+1})\in\Z^2\ |\ k\geq 0\} 
\]
\[
\T_1 =  \{(F_{2k+1},F_{2k})\ |\ k\geq 0\} 
\quad\text{and}\quad  
\T_2 = \{(F_{2k+1},-F_{2k+2})\ |\ k\geq 0\} 
\]
\end{lemma}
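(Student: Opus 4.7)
The plan is to handle each of the four sets by the same two-step scheme: (a) check directly that every listed pair satisfies both Equation~\eqref{e:quadratic} and the coprimality condition of Definition~\ref{d:S-set}; (b) rule out any other solution by running the mutation reduction of Lemma~\ref{l:mutations}(1) and matching its endpoints with the bottoms already determined in Lemma~\ref{l:bottomsets}.

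For step (a), every identity reduces to Cassini's identity $F_{m-1}F_{m+1} - F_m^2 = (-1)^m$ (or its $r=2$ analogue $F_{m-2}F_{m+2} - F_m^2 = -(-1)^m$), combined with the ``leaping'' recursion $F_{m+2} - 3F_m + F_{m-2} = 0$. For instance, in $\S$ the computation
\[
F_{2k-1}^2 + F_{2k+1}^2 - 3F_{2k-1}F_{2k+1} = F_{2k+1}^2 - F_{2k-1}F_{2k+3} = -1
\]
uses the leaping recursion in the form $3F_{2k+1} = F_{2k-1} + F_{2k+3}$ for the first equality and the $r=2$ Catalan--Cassini identity for the second. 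The identities for $\T_1$ and $\T_2$ follow directly from Cassini at indices $2k$ and $2k+1$, and $S^{-1,-1}_{1,-1}$ is verified by inspection. Remark~\ref{r:equiv-extra-cond} takes care of coprimality, since consecutive Fibonacci numbers are coprime and none of the Fibonacci entries that appear in $\S$ is divisible by $3$.

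For step (b), by Lemma~\ref{l:mutations}(1) each solution reduces to the bottom by a finite sequence of mutations; since $U$ and $V$ are involutions, every solution lies in the forward mutation orbit of an element of $b(S^{\delta_2,\delta_1}_{n,\epsilon})$. For $n=3$, the leaping recursion gives $V(-F_{2k-1}, F_{2k+1}) = (-F_{2k-1}, F_{2k-3})$ (with extended Fibonacci indices), and composition with $-S \in G$ lands one in $\S$ at parameter $k-1$. Iterating and using the relations $US = SV$, $VS = SU$ from Section~\ref{ss:laststep}, the full mutation orbit of the bottom element $(-1,1)$, combined with the action of $G$, covers all of $G \cdot \S$. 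For $\T_1 \cup \T_2$ the picture is cleaner: starting from $(1,0)$ and alternating $V$ and $U$ traces out the Fibonacci staircase one step at a time, visiting every element of $H \cdot (\T_1 \cup \T_2)$; the relation $S^{1,-1}_{1,-1} = S \cdot S^{-1,1}_{1,-1}$ dispatches the sibling case. Finally, for $S^{-1,-1}_{1,-1}$ the orbit is finite, and one checks directly that $U$ and $V$ permute the six elements $\pm\{(1,0), (0,1), (1,1)\}$.

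The main obstacle will be bookkeeping the interplay between the mutations and the symmetry group $G$, particularly for $\S$: a single application of $V$ or $U$ typically moves outside $\S$ but stays inside $G \cdot \S$, and care is needed to ensure the resulting orbit really equals all of $G \cdot \S$ rather than a proper subset. The relations $US = SV$ and $VS = SU$ are the key device here, turning each case into a finite induction on the Fibonacci index $k$.
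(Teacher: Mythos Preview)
Your approach is essentially the paper's: both arguments pivot on Lemma~\ref{l:mutations}(1), the involution property of $U,V$, and the intertwining relations $US=SV$, $VS=SU$ to reduce the problem to tracking the $\langle U,V\rangle$-orbit of the bottom and identifying it with a Fibonacci sequence. Your step~(a), verifying the listed pairs directly via Cassini-type identities, is a pleasant sanity check not present in the paper; the paper instead gets the inclusion $G\cdot\S\subset S^{1,1}_{3,-1}$ for free at the end, from the fact that $U,V,S,-I$ all preserve the solution set and $(-1,1)$ is a solution.

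There is, however, a persistent directional slip in your step~(b). Once you know that $U$ and $V$ preserve $G\cdot\S$ (which is exactly what your observation ``moves outside $\S$ but stays inside $G\cdot\S$'' says) and that the bottom lies in $G\cdot\S$, you immediately get that the full mutation orbit---which by Lemma~\ref{l:mutations}(1) is all of $S^{1,1}_{3,-1}$---is contained in $G\cdot\S$. Combined with step~(a) this is already the equality you want. The concern you flag, that the orbit might be a \emph{proper subset} of $G\cdot\S$, is the wrong worry: that containment is precisely what step~(a) handles. The genuine content of step~(b) is the \emph{other} inclusion, namely that $U,V$ do not take you outside $G\cdot\S$, and you should state it that way. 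The paper sidesteps this bookkeeping by directly parametrizing the orbit as $\{(-VS)^k(-1,1):k\ge 0\}$ modulo $G$ (respectively $(UV)^h(1,0)$ and $V(UV)^h(1,0)$ modulo $H$ in the $n=1$ case), computing the matrix powers, and reading off the Fibonacci entries---a slightly cleaner packaging of the same induction you outline.
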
 

\begin{proof}
{\bf First case:} $n=3$, $\de_1=\de_2=1$. 
By Lemmas~\ref{l:mutations} and~\ref{l:bottomsets}, 
each $(x,y)\in S_{3,-1}^{1,1}$ is sent to either 
$(1,-1)$ or $(-1,1)$ by a 
sequence of mutations. Then, $(x,y)$ is obtained from either $(1,-1)$ or $(-1,1)$ applying a map of the form 
$\cdots UVU$ or $\cdots VUV$. Since $S\co S_{3,-1}^{1,1}\to S_{3,-1}^{1,1}$ 
is an involution which intertwines the actions of $U$ and $V$ 
and $(1,-1)=S(-1,1)$, if $(x,y)\in S_{3,-1}^{1,1}$ then 
$(x,y)$ belongs to the $G$-orbit of either 
$(VU)^h(-1,1)$ or $U(VU)^h(-1,1)$ for some $h\geq 0$. 
Since  $VU = (VS)^2 = (-VS)^2$ and $U = S^2 V = S(VS)$, we can rephrase the 
condition on $(x,y)$ by saying that $(x,y)$ belongs to the $G$-orbit of $(-VS)^k(-1,1)$ for some $k\geq 0$. 
The map $-VS$ acts as multiplication by $\sm{0&-1\\1&3}$, therefore $(x,y)$ belongs 
to the $G$-orbit of $\{(x_k,y_k)\in\Z^2\ |\ k\geq 0\}$, where    
\[
\mat{x_k\\y_k} = \mat{0&-1\\1&3}^k\mat{-1\\1}.
\]
The relation $(x_{k+1},y_{k+1}) = (-YS)(x_k,y_k)$ is equivalent to  
$x_{k+1} = -y_k$ and $y_{k+1}=x_k+3y_k$, which imply 
$x_{k+2}=3x_{k+1}-x_k$. Since $(x_0,y_0)=(-1,1)$ and $(x_1,y_1)=(-1,2)$, 
we have $x_k = -F_{2k-1}$ and $y_k = F_{2k+1}$ for each $k\geq 0$, therefore 
$(x,y)$ belongs to $G\cdot\S$. This shows $S^{1,1}_{3,-1}\subset G\cdot\S$. 
Conversely, since $V,S$ and $-I$ preserve $S^{1,1}_{3,-1}$ and the latter 
contains $(-1,1)$, we have $G\cdot\S\subset S^{1,1}_{3,-1}$.

{\bf Second case:} $n=1$, $\de_1+\de_2=0$. 
By Lemmas~\ref{l:mutations} and~\ref{l:bottomsets}, a sequence of mutations sends 
each $(x,y)\in  S_{1,-1}^{-1,1}$ to $(1,0)$ or $(-1,0)$ and each $(x,y)\in S_{1,-1}^{1,-1}$ 
to $(0,1)$ or $(0,-1)$. Since $S\co S_{1,-1}^{-1,1}\to S_{1,-1}^{1,-1}$
defines a bijection intertwining $U$ and $V$ and commuting with $-I$, 
it will suffice to determine the elements of $S^{-1,1}_{1,-1}$. Since 
$U(1,0)=(-1,0)$, if $(x,y)\in S_{1,-1}^{-1,1}$ 
then $(x,y)$ belongs to the $H$-orbit of either $(UV)^h (1,0)$ or $V(UV)^h(1,0)$ for some 
$h\geq 0$. Moreover, 
\[
UV = -\mat{1&1\\1&0}^2
\]
and we have 
\[
(UV)^h = (-1)^h \mat{1&1\\1&0}^{2h},
\]
therefore $(UV)^h(1,0)  = (-1)^h(x_{2h},y_{2h})$, where the sequence $\{(x_k,y_k)\}$ is defined 
by $(x_0,y_0)=(1,0)$ and 
\[
\mat{x_{k+1}\\y_{k+1}} = \mat{1&1\\1&0}\mat{x_k\\y_k},\quad k\geq 0.
\]
But this is equivalent to $y_k = x_{k-1}$ and $x_{k+1} = x_k+x_{k-1}$, therefore $x_k = F_{k+1}$,  
where $F_k$ is the $k$-th Fibonacci number. We conclude  that $(UV)^h(1,0)  = (-1)^h(F_{2h+1},F_{2h})$. 
Since $V(F_{2h+1},F_{2h}) = (F_{2h+1}, -F_{2h+1}-F_{2h}) = (F_{2h+1},-F_{2h+2})\in S_{1,-1}^{-1,1}$, 
the proof of this case is finished. 
 
 {\bf Third case:} $n=1$, $\de_1=\de_2=-1$. 
 By Lemmas~\ref{l:mutations} and~\ref{l:bottomsets}, a sequence of mutations sends 
 each $(x,y)\in  S_{1,-1}^{-1,-1}$ to $(\pm 1,0)$ or $(0,\pm 1)$. But it is easy to check that 
 acting with $U$ and $V$ on $\pm (1,0)$ and $\pm (0,1)$ the only other pairs one can get 
 are $\pm (1,1)$. This concludes the proof. 
 \end{proof} 

\begin{proof}[Proof of Theorem~\ref{t:precise-disj-emb}]
In view of Proposition~\ref{p:cob-type10}, identifying the possible $X_{\ga_1}$ and $X_{\ga_2}$ amounts to 
identifying (up to sign) the possibile homology classes of the curves $\ga_1$ and $\ga_2$. Using the notation of 
Section~\ref{s:type10-chi=1}, let us fix a symplectic basis 
$(\mu,\la)$ of $H_1(T;\Z)$ with $\mu\cdot\la = 1$, so that $\ga_i = p_i\mu + q_i\la$
for some $p_i, q_i$ with $(p_i,q_i)=1$, $i=1,2$. Note that by Proposition~\ref{p:cob-type10} 
the assumption $b_1(\del\hat X_{\ga_i})=b_1(\del_+ X_{\ga_i})=0$ 
implies $p_1 p_2\neq 0$. 

We have already observed that the map $(x,y)\mapsto (x,-y)$ 
defines a bijection between $S_{n,\ep}^{\de_2,\de_1}$ and $S_{-n,\ep}^{\de_2,\de_1}$. 
Therefore, to list the possible $(|p_1|,|p_2|)$ it suffices to assume $n\in\{1,3\}$. 
Moreover, by Remark~\ref{r:epsilon} we have $\ep=-\si(X)=-1$. 
Then, by Lemma~\ref{l:n=13} each pair $(|p_1|,|p_2|)$ is one of the following: 
\begin{itemize}
\item 
$(F_{2k+1},F_{2k-1})$ or $(F_{2k-1},F_{2k+1})$ for some $k\geq 0$ when $\de_1=\de_2=1$ and $n=3$; 
\item 
$(F_{2k+1},F_{2k})$ or $(F_{2k+1},F_{2k+2})$ for some $k\geq 0$ when $(\de_1,\de_2)=(1,-1)$ and $n=1$;
\item 
$(F_{2k},F_{2k+1})$ or $(F_{2k+2},F_{2k+1})$ for some $k\geq 0$ when $(\de_1,\de_2)=(-1,1)$ and $n=1$.
\end{itemize}
Recall that the Fibonacci numbers satisfy Vajda's identity: 
\[
F_r F_{m+j} - F_m F_{r+j}  = (-1)^{r+1} F_{m-r} F_j.
\]
Choosing $j=4$, $m=2k-3$ and $r=2k-5$ we obtain the relation
\[
(\pm F_{2k-5}) F_{2k+1} -  (\pm F_{2k-3}) F_{2k-1} = \pm 3,
\]
which implies 
\begin{itemize}
\item 
$q_1\equiv \pm F_{2k-3}\bmod p_1$ and $q_2 \equiv \pm F_{2k-5}\bmod p_2$ when $(|p_1|,|p_2|) = (F_{2k+1},F_{2k-1})$;
\item 
$q_1\equiv \pm F_{2k-5}\bmod p_1$ and $q_2 \equiv \pm F_{2k-3}\bmod p_2$ when $(|p_1|,|p_2|) = (F_{2k-1},F_{2k+1})$. 
\end{itemize}
Since $B_{p,q}\cong B_{p,p-q}$, when $\de_1=\de_2=1$ and 
$|n|=3$ by Proposition~\ref{p:cob-type10} 
the 4-manifolds $\hat X_{\ga_1}$ and $\hat X_{\ga_2}$ are 
\[
B_{F_{2k+1},F_{2k-3}} = B_{k+1}\quad\text{and}\quad 
B_{F_{2k-1},F_{2k-5}} = B_k,\quad k\geq 0.
\]
Choosing $j=1$, $m=2k$ and $r=2k-1$ in Vajda's identity we obtain the relation
\[
(\pm F_{2k-1}) F_{2k+1}  - (\pm F_{2k}) F_{2k}  = \pm 1,
\]
which provides us with
\begin{itemize}
\item 
$q_1\equiv \pm F_{2k}\bmod p_1$ and $q_2 \equiv \pm F_{2k-1}\bmod p_2$ when $(|p_1|,|p_2|) = (F_{2k+1},F_{2k})$; 
\item 
$q_1\equiv \pm F_{2k-1}\bmod p_1$ and $q_2 \equiv \pm F_{2k}\bmod p_2$ when $(|p_1|,|p_2|) = (F_{2k},F_{2k+1})$.  
\end{itemize}
In view of Proposition~\ref{p:cob-type10}, the corresponding rational balls are 
\[
B_{F_{2k+1},F_{2k}} = B'_{2k}\quad\text{and}\quad 
-B_{F_{2k},F_{2k-1}} = B'_{2k-1},\quad k > 0.
\]
Choosing $j=1$, $m=2k+1$ and $r=2k$ in Vajda's identity we obtain the relation
\[
(\pm F_{2k}) F_{2k+2}  - (\pm F_{2k+1}) F_{2k+1}  = \mp 1,
\]
which gives 
\begin{itemize}
\item 
$q_1\equiv \pm F_{2k}\bmod p_1$ and $q_2 \equiv \pm F_{2k+1}\bmod p_2$ when $(|p_1|,|p_2|) = (F_{2k+1},F_{2k+2})$;
\item 
$q_1\equiv \pm F_{2k+1}\bmod p_1$ and $q_2 \equiv \pm F_{2k}\bmod p_2$ when $(|p_1|,|p_2|) = (F_{2k+2},F_{2k+1})$.  
\end{itemize}
The rational balls are 
\[
B_{F_{2k+1},F_{2k}} = B'_{2k}\quad\text{and}\quad 
-B_{F_{2k+2},F_{2k+1}} = B'_{2k+1},\quad k\geq 0.
\]
Finally, if $(x,y) \in S_{1,-1}^{-1,-1}$ then by Lemma~\ref{l:n=13} we have 
$(p_1,p_2) = \pm (1,1)$ and the statement follows. 
This concludes the proof of Theorem~\ref{t:precise-disj-emb}. 
\end{proof} 

\printbibliography
\Addresses
\end{document}